\newcommand{\N}{{\ensuremath{\mathbb{N}}}}
\newcommand{\C}{{\ensuremath{\mathbb{C}}}}
\newcommand{\stkout}[1]{\ifmmode\text{\sout{\ensuremath{#1}}}\else\sout{#1}\fi}
\DeclareMathOperator{\supp}{supp}
\DeclareMathOperator{\diag}{diag}
\DeclareMathOperator{\spn}{span}
\newcommand{\symm}{\ensuremath{\odot}}
\newcommand{\abs}[1]{\ensuremath{ {\left| #1 \right|} }}
\newcommand{\norm}[1]{\ensuremath{ {\left\| #1 \right\|} }}
\numberwithin{equation}{section}
\newtheorem{proposition}{Proposition}[section]
\newtheorem{lemma}[proposition]{Lemma}
\newtheorem{theorem}[proposition]{Theorem}
\newtheorem{corollary}[proposition]{Corollary}
\theoremstyle{definition}
\newtheorem{problem}[proposition]{Problem}
\newtheorem{example}[proposition]{Example}
\newtheorem{remark}[proposition]{Remark}
\newtheorem{step}{Step}
\newtheorem{stepa}{Step}[step]
\numberwithin{equation}{section}
\newlength{\leftstackrelawd}
\newlength{\leftstackrelbwd}
\def\leftstackrel#1#2{\settowidth{\leftstackrelawd}%
	{${{}^{#1}}$}\settowidth{\leftstackrelbwd}{$#2$}%
	\addtolength{\leftstackrelawd}{-\leftstackrelbwd}%
	\leavevmode\ifthenelse{\lengthtest{\leftstackrelawd>0pt}}%
	{\kern-.5\leftstackrelawd}{}\mathrel{\mathop{#2}\limits^{#1}}}
\begin{document}
	
	\title[Jordan embeddings between block upper-triangular subalgebras]{Characterizing Jordan embeddings between block upper-triangular subalgebras via preserving properties}
	
	\author{Ilja Gogi\'{c}, Tatjana Petek, Mateo Toma\v{s}evi\'{c}}
	
	\address{I.~Gogi\'c, Department of Mathematics, Faculty of Science, University of Zagreb, Bijeni\v{c}ka 30, 10000 Zagreb, Croatia}
	\email{ilja@math.hr}

	\address{T.~Petek, Faculty of Electrical Engineering and Computer Science, University of Maribor, Koro\v{s}ka cesta 46, SI-2000 Maribor, Slovenia; \newline
		Institute of Mathematics, Physics and Mechanics, Jadranska ulica 19, SI-1000 Ljubljana, Slovenia }
	\email{tatjana.petek@um.si}
	
	\address{M.~Toma\v{s}evi\'c, Department of Mathematics, Faculty of Science, University of Zagreb, Bijeni\v{c}ka 30, 10000 Zagreb, Croatia}
	\email{mateo.tomasevic@math.hr}
	
	\thanks{We thank the referee for useful suggestions that helped us to improve the presentation of our results.}

	\keywords{Jordan homomorphisms, spectrum preserver, commutativity preserver, upper-triangular matrices, matrix algebras, block upper-triangular algebras}

	\subjclass[2020]{47B49, 15A27, 16S50, 16W20}
	
	\date{\today}

	\begin{abstract}
		Let $M_n$ be the algebra of $n \times n$ complex matrices. We consider arbitrary subalgebras $\mathcal{A}$ of $M_n$ which contain the algebra of all upper-triangular matrices (i.e.\ block upper-triangular
		subalgebras), and their Jordan embeddings. We first describe Jordan embeddings $\phi : \mathcal{A} \to M_n$ as maps of the form $\phi(X)=TXT^{-1}$ or $\phi(X)=TX^tT^{-1}$, where $T\in M_n$ is an invertible matrix, and then we obtain a simple criteria of when one block upper-triangular subalgebra  Jordan-embeds into another (and in that case we describe the form of such embeddings). As a main result, we characterize Jordan embeddings $\phi : \mathcal{A} \to M_n$ (when $n\geq 3$) as continuous injective maps which preserve commutativity and spectrum. We show by counterexamples that all these assumptions are indispensable (unless $\mathcal{A} = M_n$ when injectivity is superfluous).
	\end{abstract}
	
	\maketitle
	
	\setlength{\parindent}{18pt}
	
	\section{Introduction}

	Jordan algebras were first introduced by Pascual Jordan in 1933 in the context of quantum mechanics \cite{Jordan}. The majority of the practically relevant Jordan algebras naturally arise as subalgebras of an associative algebra $\mathcal{A}$ under a symmetric product given by $$x \circ y = xy + yx.$$
	This gives rise to the study of Jordan homomorphisms in the context of associative rings and algebras. Namely, recall that an additive (linear) map $\phi : \mathcal{A} \to \mathcal{B}$ between rings (algebras) $\mathcal{A}$ and $\mathcal{B}$ is a \emph{Jordan homomorphism} if
	
	$$\phi(a \circ b) = \phi(a) \circ \phi(b), \qquad \text{ for all }a,b \in \mathcal{A}.$$
	When the rings (algebras) are $2$-torsion-free, this is equivalent to
	$$\phi(a^2) = \phi(a)^2, \qquad \text{ for all }a \in \mathcal{A}.$$
	Clearly, multiplicative and antimultiplicative maps are immediate examples of such maps. One of the main problems in the context of Jordan homomorphisms is under which assumptions on rings (algebras) $\mathcal{A}$ and  $\mathcal{B}$, usually without $2$-torsion, can we conclude that every Jordan homomorphism $\phi : \mathcal{A} \to \mathcal{B}$ (possibly satisfying some extra conditions such as surjectivity) is either multiplicative or antimultiplicative. More generally, the question is whether one can express all such Jordan homomorphisms as a suitable combination of ring (algebra) homomorphisms and antihomomorphisms. This question goes a long way back. Namely, in 1950 Jacobson and Rickart \cite{JacobsonRickart} proved that a Jordan homomorphism from an arbitrary ring into an integral domain is either a homomorphism or an antihomomorphism. This paper is particularly relevant for our discussion as it also proves that a Jordan homomorphism of the ring of $n\times n$ matrices, $n \ge 2$, over an arbitrary unital ring is the sum of a homomorphism and an antihomomorphism. In the same vein, Herstein \cite{Herstein} showed that a Jordan homomorphism onto a prime ring
	is either a homomorphism or an antihomomorphism. The same result was later refined by Smiley \cite{Smiley}.
	
	Let $M_n$ be the algebra of $n \times n$ matrices over the field of complex numbers. By combining the aforementioned result of Herstein with the well-known fact that all automorphisms of $M_n$ are inner (for a short and elegant proof see \cite{Semrl2}), one obtains that all nonzero Jordan endomorphisms $\phi$ of $M_n$ are precisely maps of the form
	\begin{equation}\label{eq:inner}
		\phi(X) = TXT^{-1} \qquad \text{or}   \qquad \phi(X) = TX^{t}T^{-1}
	\end{equation}
	(globally) for some invertible matrix $T \in M_n$.
	
	There have been many attempts to characterize Jordan homomorphisms, particularly on matrix algebras, using preserver properties. These attempts date back at least to 1970 and Kaplansky's famous problem \cite{Kaplansky} which asks under which conditions on unital (complex) Banach algebras $A$ and $B$ is a linear unital map $\phi : A \to B$ which preserves invertibility necessarily a Jordan homomorphism. This problem received a lot of attention and progress was made in some special cases, but it is still widely open, even for $C^*$-algebras (see \cite{BresarSemrl}, page 270). For other interesting types of linear preserver problems resulting in more general kinds of maps, we refer to the survey paper \cite{LiPierce} and references within. We would also like to distinguish the following nonlinear preserver problem which elegantly characterizes Jordan automorphisms of $M_n$:
	
	\begin{theorem}[\v{S}emrl]\label{thm:Semrl}
		Let $\phi : M_n \to M_n, n \ge 3$ be a continuous map which preserves commutativity and spectrum. Then there exists an invertible matrix $T \in M_n$ such that $\phi$ is of the form \eqref{eq:inner}.
	\end{theorem}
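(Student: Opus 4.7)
The plan is to progressively normalize $\phi$ by composing with similarities (and, if necessary, the transpose), arranging that $\phi$ act as the identity on a dense subset of $M_n$. I would use spectrum preservation to control the image of diagonalizable matrices, and commutativity preservation to propagate information along maximal commuting families.

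First, I fix $D=\diag(1,2,\ldots,n)$. Since $\sigma(\phi(D))=\sigma(D)$ is simple, $\phi(D)$ is similar to $D$; replacing $\phi$ by $X\mapsto S^{-1}\phi(X)S$ for appropriate $S$, I may assume $\phi(D)=D$. Commutativity preservation then forces $\phi(\mathcal{D}_n)\subseteq\{D\}'=\mathcal{D}_n$, where $\mathcal{D}_n$ denotes the diagonal subalgebra. Next I invoke spectrum preservation on diagonal matrices with pairwise distinct entries, together with connectedness of that open subset of $\mathcal{D}_n$ and continuity of $\phi$, to produce a single permutation $\pi\in S_n$ with $\phi(\diag(a_1,\ldots,a_n))=\diag(a_{\pi(1)},\ldots,a_{\pi(n)})$ throughout $\mathcal{D}_n$. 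Conjugating by the matrix of $\pi^{-1}$, I may further assume $\phi|_{\mathcal{D}_n}=\id$.

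Next, spectrum preservation sends every rank-one idempotent $E$ (spectrum $\{0,1\}$ with multiplicities $n-1,1$) to a rank-one idempotent, so $\phi$ descends to a self-map $\bar\phi$ of the set of such idempotents. Identifying each idempotent with the pair $(\mathrm{Im}\,E,\ker E)\in\mathbb{P}^{n-1}\times(\mathbb{P}^{n-1})^*$, commutativity of rank-one idempotents translates into a purely incidence-geometric condition (two rank-one idempotents commute iff they are equal or each annihilates the other's image). Using commutativity to detect when three one-dimensional subspaces lie in a common plane, I would argue that $\bar\phi$ is either a collineation or a duality of $\mathbb{P}^{n-1}$. For $n\geq 3$ the fundamental theorem of projective geometry then represents $\bar\phi$ as the projectivization of a $\mathbb{C}$-semilinear (anti)automorphism of $\mathbb{C}^n$; continuity of $\phi$ restricts the field automorphism to the identity or complex conjugation, and spectrum preservation on matrices with non-real eigenvalues rules out the conjugate option.

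After one more similarity and possibly composing with the transpose, $\phi$ fixes every rank-one idempotent. Since a matrix with simple spectrum is the sum of its eigenvalue-weighted rank-one spectral projectors (each already fixed), $\phi$ coincides with the identity on the dense open set of matrices with simple spectrum, and continuity extends this equality to all of $M_n$. The main obstacle is the middle step: extracting a collineation or duality purely from the commutativity datum on idempotents, and pinning down the semilinear part via spectrum; this is precisely where the hypothesis $n\geq 3$ becomes essential, since the fundamental theorem of projective geometry fails for projective lines.
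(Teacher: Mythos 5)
This statement is not proved in the paper at all: it is \v{S}emrl's theorem, imported from \cite{PetekSemrl, Semrl} and used as a black box (e.g.\ to cover the $M_3$ case in the induction base of Theorem \ref{thm:main result}); so the only in-paper material to compare with is the machinery of Steps 1--3 of the proof of Theorem \ref{thm:main result}, which replicates the known analytic strategy (normalization on diagonal matrices via the path-connectedness argument, homogeneity, and rank-one preservation via conjugated diagonal families). Your opening normalization coincides with that machinery and is fine, but your route through projective geometry contains genuine gaps. First, the claim that spectrum preservation sends a rank-one idempotent to a rank-one idempotent is unjustified: the spectrum is a set, and even after upgrading to preservation of the characteristic polynomial (which itself requires the continuity argument of Lemma \ref{lem:char}), a matrix with characteristic polynomial $x^{n-1}(x-1)$ need not be idempotent nor of rank one (take $E_{11}+E_{23}$ in $M_3$). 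The correct argument is the one the paper uses in Step \ref{st:rk1}: write $E=SE_{ii}S^{-1}$ and run the Step \ref{st:0001}-type diagonal argument on $X\mapsto\phi(SXS^{-1})$, i.e.\ you need commutativity and continuity, not spectrum alone.

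Second, the middle step --- which you yourself identify as the main obstacle --- is only a plan, and it is exactly where the difficulties sit: (i) $\phi$ is not assumed injective in this theorem (the paper even stresses injectivity is superfluous for $M_n$), so injectivity of the induced map on rank-one idempotents, and indeed the well-definedness of a map on points of $\mathbb{P}^{n-1}$ (why should idempotents with the same image go to idempotents with the same image?), must be derived, not assumed; (ii) commutativity is preserved only in one direction ($A\leftrightarrow B\Rightarrow\phi(A)\leftrightarrow\phi(B)$), so an ``iff'' characterization of coplanarity in terms of the commutation relation does not automatically transfer through $\phi$, and this is precisely what a collineation/duality conclusion and the fundamental theorem of projective geometry (in a version valid for possibly non-bijective maps) would require. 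Finally, your concluding step tacitly uses additivity: from ``$A$ is the sum of its eigenvalue-weighted spectral projectors, each fixed by $\phi$'' you cannot conclude $\phi(A)=A$, since $\phi$ is just a continuous map. This is repairable in the standard way (as in Steps \ref{step:finish 3x3}/\ref{st:6} of the paper): $\phi(A)$ commutes with the fixed rank-one projectors, hence lies in the maximal commutative algebra they span, and then spectrum preservation plus a connectedness argument pins down the eigenvalue assignment; but as written the argument is incomplete.
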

	
	A precursor to this result was first formulated in \cite{PetekSemrl} and it assumed its current form a decade later in \cite{Semrl}. It also serves as the main motivation for our investigation. Namely, we are interested in the following general problem:
	\begin{problem}\label{Prob:main}
		Find necessary and sufficient conditions on a (Jordan) subalgebra $\mathcal{A}$ of $M_n$ such that each Jordan automorphism of $\mathcal{A}$, or more generally a Jordan embedding (monomorphism) $\phi : \mathcal{A} \to M_n$, extends to a Jordan automorphism of $M_n$. Additionally, for such $\mathcal{A}$, characterize all such maps $\phi$ via suitable preserving properties, similarly as in Theorem \ref{thm:Semrl}.
	\end{problem}
	
	The first natural example to consider in the context of Problem \ref{Prob:main} is the algebra $\mathcal{T}_n$ of $n\times n$ upper-triangular complex matrices. First of all, it is well-known that all Jordan automorphisms of $\mathcal{T}_n$ are of the form \eqref{eq:inner} for a suitable invertible matrix $T \in M_n$ (see e.g.\ \cite[Corollary 4]{MolnarSemrl}). The same holds true for all Jordan embeddings $\mathcal{T}_n \to M_n$ (a special case of our first result, Theorem \ref{thm:Jordan monomorphisms on A}). Also, Jordan automorphisms of $\mathcal{T}_n$ (as well as more general type of maps on $\mathcal{T}_n$) were characterized via both linear and nonlinear preserving properties by several authors (see e.g.\ \cite{CheungLi, ChooiLim, ChoiLimSemrl, CuiHouLi, Huang, LiSemrlSoares, Petek, ZhaoWangWang}).  In particular, following Theorem \ref{thm:Semrl}, the second-named author \cite{Petek} described all Jordan automorphisms of $\mathcal{T}_n$ as continuous spectrum and commutativity preserving surjective maps $\mathcal{T}_n \to \mathcal{T}_n$. Analyzing the main result of \cite{Petek} it is easy to verify that the same holds true if instead of surjectivity one assumes injectivity, thus providing a positive solution for (both parts of) Problem \ref{Prob:main} when $\mathcal{A}=\mathcal{T}_n$.
	
	Continuing in this vein, the next class of algebras we consider are subalgebras $\mathcal{A}$ of $M_n$ which contain $\mathcal{T}_n$. It turns out that these algebras are precisely the \emph{block upper-triangular subalgebras} of $M_n$ (see \cite{WangPanWang}). More specifically, any such algebra is of the form
	\begin{equation}\label{eq:definition of A}
		\mathcal{A}_{k_1, \ldots, k_r}:=\begin{bmatrix} M_{k_1,k_1} & M_{k_1,k_2} & \cdots & M_{k_1,k_r} \\ 0 & M_{k_2,k_2} & \cdots & M_{k_2,k_r} \\
			\vdots & \vdots & \ddots & \vdots \\
			0 & 0 & \cdots & M_{k_r,k_r} \end{bmatrix}
	\end{equation}
	for some $r, k_1, \ldots, k_r \in \N$ such that $k_1+\cdots+k_r = n$. These algebras also appear in the literature as parabolic algebras (see e.g.\ \cite{Agore, WangPanWang}). Note that the block upper-triangular algebras $\mathcal{A}_{1, n-1}$ and $\mathcal{A}_{n-1, 1}$ are exactly (up to similarity) the unital strict subalgebras of $M_n$ of maximal dimension (see \cite{Agore}). Our first introductory result verifies that block upper-triangular algebras indeed satisfy the desired extension property, providing an affirmative answer for the first part of Problem \ref{Prob:main}:
	
	\begin{theorem}\label{thm:Jordan monomorphisms on A}
		Let $\mathcal{A} \subseteq M_n$ be a block upper-triangular subalgebra and let $\phi : \mathcal{A} \to M_n$ be a Jordan embedding. Then there exists an invertible matrix $T \in M_n$ such that $\phi$ is of the form \eqref{eq:inner}.
	\end{theorem}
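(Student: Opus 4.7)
My plan is to reduce $\phi$, via composition with a suitable inner automorphism of $M_n$, to either the identity embedding or the transpose map on $\mathcal{A}$, from which the theorem follows at once. The first step is to verify that $\phi$ is unital. Writing $f_i := \phi(E_{ii})$, the Jordan relations $E_{ii}^2=E_{ii}$ and $E_{ii}\circ E_{jj}=0$ for $i\ne j$ make each $f_i$ an idempotent with $f_if_j+f_jf_i=0$; multiplying on either side by $f_i$ then yields the usual orthogonality $f_if_j = 0$. Injectivity of $\phi$ forces $f_i\ne 0$, so each $f_i$ has rank at least one, while $\sum_i f_i = \phi(I_n)$ has rank at most $n$. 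Hence each $f_i$ has rank exactly one and $\phi(I_n)=I_n$. The $f_i$ thus form a complete orthogonal system of rank-one idempotents, and a similarity $S\in\mathrm{GL}_n$ conjugates each to $E_{ii}$; replacing $\phi$ by $S^{-1}\phi(\cdot)S$, I may assume from now on that $\phi(E_{ii})=E_{ii}$ for all $i$.

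Next, for every $E_{ij}\in\mathcal{A}$ with $i\ne j$, applying $\phi$ to $E_{kk}\circ E_{ij}=(\delta_{ki}+\delta_{kj})E_{ij}$ and reading off Peirce components with respect to $\{E_{kk}\}$ pins down
\[
\phi(E_{ij}) \;=\; \alpha_{ij}E_{ij} + \beta_{ij}E_{ji}
\]
for scalars $\alpha_{ij},\beta_{ij}$. Since $E_{ij}^2=0$, the computation $\phi(E_{ij})^2 = \alpha_{ij}\beta_{ij}(E_{ii}+E_{jj})$ forces $\alpha_{ij}\beta_{ij}=0$, while injectivity forbids both coefficients from vanishing. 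Applying $\phi$ to $E_{ij}\circ E_{jk}=E_{ik}$ for $i<j<k$ (all three units live in $\mathcal{T}_n\subseteq\mathcal{A}$) gives $\alpha_{ik}=\alpha_{ij}\alpha_{jk}$ and $\beta_{ik}=\beta_{ij}\beta_{jk}$; together with $\alpha_{ij}\beta_{ij}=0$ this propagates a global dichotomy across upper-triangular indices: either (A) every $\beta_{ij}=0$ (and every $\alpha_{ij}\ne 0$), or (B) every $\alpha_{ij}=0$ (and every $\beta_{ij}\ne 0$).

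The main technical subtlety is to transport this dichotomy to the within-block lower units $E_{ij}\in\mathcal{A}$ with $i>j$. Here I plan to use $E_{ij}\circ E_{ji}=E_{ii}+E_{jj}$ together with the already-determined form of $\phi(E_{ji})$ (available since $j<i$): in case (A) this gives $\alpha_{ij}\alpha_{ji}=1$, forcing $\beta_{ij}=0$, and analogously in case (B) $\beta_{ij}\beta_{ji}=1$. Once the dichotomy holds uniformly across $\mathcal{A}$, the multiplicative law $\alpha_{ik}=\alpha_{ij}\alpha_{jk}$ with $\alpha_{ii}=1$ lets me write $\alpha_{ij}=d_i/d_j$ for $d_1:=1$, $d_i:=\alpha_{1i}^{-1}$. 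Conjugation by $D:=\diag(d_1,\ldots,d_n)$ then matches $DE_{ij}D^{-1}=\phi(E_{ij})$ on every matrix unit of $\mathcal{A}$ in case (A), hence gives $\phi(X)=DXD^{-1}$ on all of $\mathcal{A}$ by linearity; undoing the conjugation of the first step yields $\phi(X)=TXT^{-1}$ with $T=SD$. Case (B) is entirely symmetric and produces $\phi(X)=TX^tT^{-1}$.
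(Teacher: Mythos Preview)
Your proof is correct. Both you and the paper begin identically: show that the $\phi(E_{ii})$ form a complete system of pairwise orthogonal rank-one idempotents, then conjugate so that $\phi(E_{ii})=E_{ii}$. From there the strategies diverge. The paper proceeds by induction on $n$: it fixes only $\phi(E_{11})=E_{11}$, observes that $\phi$ restricts to a Jordan embedding of the block upper-triangular algebra $\mathcal{A}\cap E_{11}^\perp$ into $M_{n-1}$, invokes the induction hypothesis to normalize this restriction to the identity or the transpose, and then handles the first-row and first-column matrix units separately by ad hoc Jordan relations. Your argument is direct and induction-free: after normalizing all diagonal units simultaneously, you compute $\phi(E_{ij})=\alpha_{ij}E_{ij}+\beta_{ij}E_{ji}$ via the full Peirce decomposition, and then exploit $E_{ij}\circ E_{jk}=E_{ik}$ on upper-triangular triples to obtain the cocycle identities $\alpha_{ik}=\alpha_{ij}\alpha_{jk}$, $\beta_{ik}=\beta_{ij}\beta_{jk}$. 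The propagation of the global dichotomy (pivoting on whether $\alpha_{12}$ or $\beta_{12}$ vanishes) and the extension to within-block lower units via $E_{ij}\circ E_{ji}=E_{ii}+E_{jj}$ are both valid as stated. Writing $\alpha_{ij}=d_i/d_j$ and conjugating by a single diagonal matrix is arguably more transparent than the paper's recursive normalization. The paper's inductive route is more modular and generalizes naturally to nest-algebra settings; your cocycle approach is self-contained and closer in spirit to the classical Jacobson--Rickart treatment of full matrix rings.
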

	After providing the short proof of Theorem \ref{thm:Jordan monomorphisms on A}  (in Section \S\ref{sec:block upper-triangular subalgebras and their Jordan embeddings}) we also present simple criteria of when one block upper-triangular algebra (Jordan-)embeds into another (see Corollary \ref{cor:Jordan monomorphism unified} and \ref{cor:when are block upper-triangular algebras isomorphic}). To put Theorem \ref{thm:Jordan monomorphisms on A} in a wider context, there is a number of related (and somewhat more general) results concerning Jordan homomorphisms of certain classes of (block) upper-triangular rings and algebras. For example, an influential result by Beidar, Bre\v{s}ar and Chebotar states that every Jordan isomorphism of the algebra of upper-triangular matrices $\mathcal{T}_n(\mathcal{C}), n \ge 2$ over a $2$-torsion-free commutative unital ring $\mathcal{C}$ without nontrivial idempotents onto an arbitrary $\mathcal{C}$-algebra is necessarily multiplicative or antimultiplicative \cite{BeidarBresarChebotar}. A generalization was given in \cite{LiuTsai} by removing the assumption that $\mathcal{C}$ has no nontrivial idempotents. These results were further developed by Benkovi\v c in \cite{Benkovic}; any Jordan homomorphism from $\mathcal{T}_n(\mathcal{C}), n \ge 2$ into an algebra $\mathcal{B}$ is a (so-called) near-sum of a homomorphism and an antihomomorphism. We also mention papers \cite{Boboc, DuWang, WangWang, Wong} which treat similar problems for Jordan homomorphisms between more general types of (block) triangular matrix rings.
	
	Concerning block upper-triangular subalgebras in particular, the papers \cite{Akkurt, Akkurt2, BrusamarelloFornaroliKhrypchenko1, BrusamarelloFornaroliKhrypchenko2} (in fact, \cite{Akkurt, Akkurt2, BrusamarelloFornaroliKhrypchenko1, BrusamarelloFornaroliKhrypchenko2} are sequels of the aforementioned paper \cite{Benkovic}) describe Jordan homomorphisms of certain classes of more general algebras. As block upper-triangular algebras are (up to isomorphism) precisely finite-dimensional instances of nest algebras, papers \cite{ChenLuChen, Lu} are also relevant. Note that many of the mentioned results above assume that the image of Jordan homomorphisms are rings or algebras. In Theorem \ref{thm:Jordan monomorphisms on A} we make no such assumption but this is obviously compensated by the fact that the codomain is restricted to $M_n$, which incidentally also enables us to state the explicit form \eqref{eq:inner} of such maps. 
	
	After verifying that block upper-triangular algebras $\mathcal{A}$ satisfy the first part of Problem \ref{Prob:main}, the next step is to characterize Jordan embeddings $\phi : \mathcal{A} \to M_n$ via suitable preserver properties. Building upon both Theorem \ref{thm:Semrl} and \cite[Corollary 3]{Petek}, we arrived at the following theorem, which is also the main result of our paper:
	
	\begin{theorem}\label{thm:main result}
		Let $\mathcal{A} \subseteq M_n, n \ge 3$, be a block upper-triangular subalgebra and let $\phi : \mathcal{A} \to M_n$ be a continuous injective map which preserves commutativity and spectrum. Then $\phi$ is a Jordan embedding and hence of the form \eqref{eq:inner} for some invertible matrix $T \in M_n$.
	\end{theorem}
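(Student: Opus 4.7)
By Theorem~\ref{thm:Jordan monomorphisms on A}, to establish the conclusion it suffices to prove that $\phi$ is a Jordan embedding; the explicit form~\eqref{eq:inner} then follows at once. My plan is a two-stage bootstrap: first determine $\phi$ on the upper-triangular subalgebra $\mathcal{T}_n \subseteq \mathcal{A}$ via the known preserver characterization for $\mathcal{T}_n$, then propagate to the ``subdiagonal'' matrix units of $\mathcal{A}$ which, together with $\mathcal{T}_n$, generate $\mathcal{A}$.

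\textbf{Stage~1: the restriction to $\mathcal{T}_n$.} The restriction $\phi_0 := \phi|_{\mathcal{T}_n} \colon \mathcal{T}_n \to M_n$ is continuous, injective, and preserves both commutativity and spectrum. I will apply (a codomain-extended version of) the second-named author's \cite[Corollary~3]{Petek} --- whose injective-hypothesis variant is noted already in the introduction --- to conclude $\phi_0$ is a Jordan embedding. The crucial input beyond \cite{Petek} is that the image $\phi_0(D_n)$ of the diagonal subalgebra remains simultaneously diagonalizable: this follows because on the open dense locus of diagonals with pairwise distinct eigenvalues spectrum preservation forces $\phi_0$ to produce commuting semisimple matrices with simple spectra, hence simultaneously diagonalizable, and continuity extends the conclusion to all of $D_n$. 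After a similarity and possibly a permutation one may then normalize $\phi_0|_{D_n} = \id_{D_n}$ and apply Petek's argument verbatim. Once $\phi_0$ is known to be a Jordan embedding, Theorem~\ref{thm:Jordan monomorphisms on A} produces an invertible $T_0 \in M_n$ realizing $\phi_0$ in the form~\eqref{eq:inner}.

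\textbf{Stage~2: extending to $\mathcal{A}$.} Composing $\phi$ with the inner automorphism $Y \mapsto T_0^{-1}YT_0$ of $M_n$ and, in the transpose case, further with the Jordan automorphism $Y \mapsto Y^t$ --- each of which preserves all four hypotheses on $\phi$ --- we reduce to the situation $\phi|_{\mathcal{T}_n} = \id_{\mathcal{T}_n}$. It then remains to show $\phi(E_{ij}) = E_{ij}$ for every matrix unit $E_{ij} \in \mathcal{A} \setminus \mathcal{T}_n$ (i.e.\ $i > j$ with $i, j$ in the same diagonal block). For such $E_{ij}$, commutativity preservation together with the fact that $\phi$ fixes $\mathcal{T}_n$ pointwise forces $\phi(E_{ij})$ to centralize the full $\mathcal{T}_n$-centralizer of $E_{ij}$; a direct computation confines $\phi(E_{ij})$ to $\spn\{E_{ii}, E_{jj}, E_{ij}, E_{ji}\}$. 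Nilpotency (from spectrum preservation) together with further commutator constraints against the upper-triangular units $E_{kl}$ with $k \ne j$ and $l \ne i$ eliminates the $E_{ii}, E_{jj}, E_{ji}$ components, yielding $\phi(E_{ij}) = c_{ij} E_{ij}$. To identify $c_{ij} = 1$, I will exploit spectrum preservation on the two-parameter family $\alpha E_{ij} + \beta E_{ji}$, whose spectrum $\{\pm\sqrt{\alpha\beta},0,\ldots,0\}$ varies nontrivially with $\alpha\beta$, together with continuity, injectivity, and the already-determined values of $\phi$ on $\alpha E_{ij}$ and $\beta E_{ji}$. Once $\phi$ agrees with the identity on a generating set of $\mathcal{A}$, a final continuity argument yields $\phi = \id_{\mathcal{A}}$, which is trivially a Jordan embedding.

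\textbf{Expected main obstacle.} I expect the principal difficulty to lie in Stage~2: the jump from $\mathcal{T}_n$ (where Petek's machinery applies essentially verbatim) to $\mathcal{A}$ requires extracting exact scalar information from purely nonlinear spectral and commutativity data, since after normalization $\phi$ is still only continuous --- not additive --- on $\mathcal{A}$. Moreover, the analysis must handle edge cases in which $(i,j)$ sits at extremal positions within its diagonal block (so that some of the convenient commutator witnesses $E_{kl} \in \mathcal{T}_n$ fail to exist), presumably by substituting alternative spectral identities to cover all situations uniformly.
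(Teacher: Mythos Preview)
Your Stage~2 contains a fatal gap: the concluding claim ``once $\phi$ agrees with the identity on a generating set of $\mathcal{A}$, a final continuity argument yields $\phi = \id_{\mathcal{A}}$'' is unjustified, because $\phi$ is \emph{not} assumed additive. The matrix units generate $\mathcal{A}$ only as a \emph{linear} space (or algebra), and the whole point of the theorem is that linearity is not among the hypotheses. Even granting $\phi|_{\mathcal{T}_n}=\id$ and $\phi(E_{ij})=E_{ij}$ for the finitely many subdiagonal units, the set on which $\phi$ is determined is $\mathcal{T}_n$ together with some lines through the origin---a closed, nowhere-dense subset of $\mathcal{A}$---so continuity buys nothing. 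The same defect undermines your argument for $c_{ij}=1$: you propose to read off $c_{ij}$ from the spectrum of $\phi(\alpha E_{ij}+\beta E_{ji})$, but without additivity there is no relation whatsoever between $\phi(\alpha E_{ij}+\beta E_{ji})$ and the values $\phi(\alpha E_{ij})$, $\phi(\beta E_{ji})$ that you have already computed. The paper confronts exactly this obstacle: after normalizing on diagonals it first establishes homogeneity and an orthogonality-preservation property (its Step~2), then proves that $\phi$ fixes \emph{every} rank-one idempotent in $\mathcal{A}$ (not just matrix units). Only then can one conclude $\phi=\id$, by re-running the diagonal normalization (Step~1) for the map $X\mapsto\phi(SXS^{-1})$ with $S\in\mathcal{A}^\times$ arbitrary, and invoking density of matrices with $n$ distinct eigenvalues.

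Your Stage~1 is also shakier than presented. The result in \cite{Petek} concerns maps $\mathcal{T}_n\to\mathcal{T}_n$; its arguments repeatedly exploit the upper-triangular shape of the \emph{image} to constrain entries. Extending the codomain to $M_n$ is precisely the $\mathcal{A}=\mathcal{T}_n$ instance of the theorem you are trying to prove, so invoking a ``codomain-extended version'' and asserting that ``Petek's argument applies verbatim'' after diagonal normalization is circular unless you supply that extension yourself (which the paper in effect does in its base case $n=3$ and inductive step). In short, the two-stage bootstrap collapses at both ends: Stage~1 presupposes a special case of the theorem, and Stage~2 presupposes linearity.
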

	Moreover, if we additionally assume that the image of $\phi$ is contained in $\mathcal{A}$, so that $\phi : \mathcal{A} \to \mathcal{A}$, using the invariance of domain theorem we show that the spectrum preserving assumption can be further relaxed to spectrum shrinking (Corollary \ref{cor:invdomain}).
	
	\bigskip
	
	This paper is organized as follows. We begin by providing terminology and notation in Section \S\ref{sec:prel} along with some preliminary technical results related to block upper-triangular algebras and Jordan homomorphisms. Section \S\ref{sec:block upper-triangular subalgebras and their Jordan embeddings} contains the proof of Theorem \ref{thm:Jordan monomorphisms on A} and its consequences regarding (Jordan) embeddings between two block upper-triangular subalgebras of $M_n$. Section \S\ref{sec:Proof of the main result} is the core of the paper. It contains the proof of Theorem \ref{thm:main result}, which is nontrivial and contains most of the actual work. The proof is carried out by induction on $n$. After proving Theorem \ref{thm:main result} a few direct consequences are stated. Finally, in Section \S\ref{sec:Counterexamples}, we demonstrate the necessity of all assumptions of Theorem \ref{thm:main result} via counterexamples.
	
	\setlength{\parindent}{0pt}
	
	\section{Preliminaries}\label{sec:prel}
	We begin this section by introducing some notation and terminology. First of all, for a unital algebra  $\mathcal{A}$ by $\mathcal{A}^{\times}$ we denote the set of all invertible elements in $\mathcal{A}$.
	
	\smallskip
	
	Let $n \in\N$.
	\begin{itemize}
		\item As usual, by $M_n := M_n(\C)$ we denote the set of all $n\times n$ complex matrices and by $M_{m,n}$ ($m \in \N$) the set of all $m\times n$ complex matrices.
		\item For $A,B \in M_n$ we denote by $A \leftrightarrow B$ the fact that $A$ and $B$ commute, i.e.\ $AB = BA$.
		\item We say that the matrices $A,B \in M_n$ are orthogonal, and write $A \perp B$, if $AB=BA=0$. Furthermore, by $A^\perp$ we denote the set of all matrices in $M_n$ orthogonal to $A$.
		\item For $A,B \in M_n$ we write $A\circ B=AB+BA$.
		\item For $A \in M_n$ we denote the characteristic polynomial of $A$ by $p_A(x) = \det(A - xI)$, by $R(A)$ the image of $A$ and by $N(A)$ the nullspace of $A$.
		\item By $\mathcal{T}_n$ and $\mathcal{D}_n$ we denote the sets of all upper-triangular and diagonal matrices of $M_n$, respectively.
		\item By $\mathcal{A}_{k_1, \ldots, k_r}$ we denote the block upper-triangular algebra \eqref{eq:definition of A} for some $r$, $k_1, \ldots, k_r \in \N$ such that $k_1+\cdots+k_r = n$.

		\item By $\Delta$ we denote the diagonal matrix  $\diag(1,\ldots, n)$.
		\item For $1 \le i,j \le n$ we denote by $E_{ij}\in M_n$ the standard matrix unit with $1$ at the position $(i,j)$ and $0$ elsewhere.
		\item The standard basis  vectors of $\C^n$ are denoted by $e_1, \ldots, e_n$.
	\end{itemize}

	\smallskip
	
	As usual, we will frequently identify vectors $x = (x_1,\ldots,x_n) \in \C^n$ as column-matrices $$x = \begin{bmatrix}
		x_1 \\ \vdots \\ x_n
	\end{bmatrix}$$ and $x^t = \begin{bmatrix} x_1 & \cdots & x_n \end{bmatrix}$ as row-matrices.\smallskip
	
	As any matrix  $A = [ a_{ij}] \in M_n$ can be understood as a map $\{1,\ldots, n\}^2 \to \C, (i,j) \mapsto a_{ij}$, we consider its support $\supp A$ as the set of all indices $(i,j) \in \{1,\ldots, n\}^2$ such that $a_{ij} \ne 0$. We also say that $A$ is supported in a set $S\subseteq \{1,\ldots, n\}^2$ if $\supp A \subseteq S$.
	
	\begin{lemma}\label{le:the J maneuver}
		Let
		$$J := \begin{bmatrix}
			0 & 0 & \cdots & 0 & 1 \\
			0 & 0 & \cdots & 1 & 0 \\
			\vdots & \vdots & \ddots & \vdots & \vdots \\
			0 & 1 & \cdots & 0 & 0 \\
			1 & 0 & \cdots & 0 & 0 \\
		\end{bmatrix} \in M_n.$$
		Then the map $\mathcal{A}_{k_1,\dots,k_r} \to \mathcal{A}_{k_r,\dots,k_1}, X \mapsto JX^t J$ is an algebra antiisomorphism.
	\end{lemma}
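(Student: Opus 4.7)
The plan is to factor $\Phi(X) := JX^tJ$ through the transpose antiautomorphism of $M_n$ and the inner automorphism $X \mapsto JXJ$, and then check that the restriction to the given block subalgebras is bijective.

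First, observe that $J$ is the permutation matrix of the reversal $i \mapsto n+1-i$, so $J = J^t$ and $J^2 = I$. Linearity of $\Phi$ is immediate, antimultiplicativity follows from $\Phi(XY) = JY^tX^tJ = (JY^tJ)(JX^tJ) = \Phi(Y)\Phi(X)$, and $\Phi \circ \Phi = \id_{M_n}$ since $(X^t)^t = X$ and $J^2 = I$. Thus $\Phi$ is a linear involutive antiautomorphism of $M_n$.

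Second, the entrywise formula $\Phi(X)_{ij} = X_{n+1-j,\,n+1-i}$ shows that $\Phi$ is reflection across the antidiagonal of $M_n$. To conclude $\Phi(\mathcal{A}_{k_1,\dots,k_r}) = \mathcal{A}_{k_r,\dots,k_1}$, I would introduce the block-index function $\beta : \{1,\dots,n\} \to \{1,\dots,r\}$ assigning each index to the diagonal block it lies in under the partition $(k_1,\dots,k_r)$, together with the analogous $\beta'$ for the reversed partition, and verify the identity $\beta'(i) = r + 1 - \beta(n+1-i)$. Since $X \in \mathcal{A}_{k_1,\dots,k_r}$ is equivalent to $X_{ab} = 0$ whenever $\beta(a) > \beta(b)$ (and similarly for $\mathcal{A}_{k_r,\dots,k_1}$ in terms of $\beta'$), the two vanishing conditions correspond exactly under $\Phi$, giving $\Phi(\mathcal{A}_{k_1,\dots,k_r}) \subseteq \mathcal{A}_{k_r,\dots,k_1}$. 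Applying the same argument with the partitions swapped (equivalently, invoking involutivity of $\Phi$) yields the reverse inclusion.

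The main obstacle is purely the index bookkeeping for the block-reversal; there is no conceptual difficulty. A slightly more hands-on alternative is to work directly in block form: writing $X = [X_{pq}]_{1 \leq p,q \leq r}$ with $X_{pq} \in M_{k_p,k_q}$, a short calculation shows that the $(p,q)$-block of $\Phi(X)$ under the partition $(k_r,\dots,k_1)$ equals $J_{k_{r+1-p}} \, X_{r+1-q,\,r+1-p}^t \, J_{k_{r+1-q}}$, where $J_k$ denotes the $k \times k$ reversal; this block vanishes precisely when $p > q$, confirming membership in $\mathcal{A}_{k_r,\dots,k_1}$.
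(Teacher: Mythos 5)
Your proposal is correct and follows essentially the same route as the paper: antimultiplicativity comes from transposition together with $J^2=I$, and membership of the image in $\mathcal{A}_{k_r,\dots,k_1}$ is verified by the same index bookkeeping (the paper checks it on matrix units via $JE_{ij}^tJ=E_{n+1-j,\,n+1-i}$, you via the block-index characterization of the vanishing entries). Your explicit use of involutivity to get surjectivity onto $\mathcal{A}_{k_r,\dots,k_1}$ is a small but welcome addition that the paper leaves implicit.
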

	\begin{proof} From $J^2=I$ and the fact that transposition is antimultiplicative we conclude that the (obviously injective and additive) map $\mathcal{A}_{k_1,\dots,k_r} \to M_n$ is an algebra antihomomorphism.  
		It remains  to show that $JX^t J \in \mathcal{A}_{k_r, \ldots, k_1}$ for all $X \in \mathcal{A}_{k_1,\ldots,k_r}$. Indeed, for each $0 \le s \le r-1$, $1 \le i \le k_1+\cdots + k_{s+1}$ and $k_1+\cdots+ k_s+1 \le j \le n$ we have
		$$JE_{ij}^tJ = E_{n+1-j,n+1-i} \in \mathcal{A}_{k_r, \ldots, k_1}$$
		since
		$k_r+\cdots+k_{s}+1 \le n+1-i \le n$ and $1 \le n+1-j \le k_r+\cdots + k_{s+1}$.
	\end{proof}
	
	This map is so ubiquitous that we will introduce a notation 
	$$
	X^{\symm} := JX^tJ
	$$ 
	for any $X \in M_n$. Note that it actually corresponds to mirroring the matrix $X$ along its secondary diagonal. Also, for a block upper-triangular algebra $\mathcal{A} \subseteq M_n$ we denote by $\mathcal{A}^\symm \subseteq M_n$ the image of $\mathcal{A}$ under the map $X \mapsto X^{\symm}$. Then $\mathcal{A}^\symm$ is the block upper-triangular algebra obtained from $\mathcal{A}$ by reversing the sizes of the diagonal blocks.
	
	\smallskip

	\begin{lemma}\label{lem:diagonalizes within A}
		Let $A \in \mathcal{A}$ be a matrix with $n$ distinct eigenvalues. Then $A$ is diagonalizable within the algebra $\mathcal{A}$. Furthermore, if $A \leftrightarrow E_{ss}$ for some $1 \le s \le n$, then the similarity matrix $T=[t_{ij} ]\in \mathcal{A}^\times$ can be chosen to satisfy $T \leftrightarrow E_{ss}$ and $t_{ss}=1$.
	\end{lemma}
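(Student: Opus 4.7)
The plan is to explicitly construct a basis of eigenvectors of $A$ that forms the columns of a block upper-triangular matrix $T \in \mathcal{A}$. Write $A = [A_{ij}]$ in the block form dictated by $\mathcal{A} = \mathcal{A}_{k_1,\ldots,k_r}$, with diagonal blocks $A_{ii} \in M_{k_i,k_i}$. Since the characteristic polynomial of a block upper-triangular matrix factors as $p_A = p_{A_{11}} \cdots p_{A_{rr}}$ and $A$ has $n$ distinct eigenvalues, the eigenvalues of the diagonal blocks are pairwise distinct; in particular, each $A_{ii}$ has $k_i$ distinct eigenvalues. For each eigenvalue $\lambda$ of $A$, let $\ell = \ell(\lambda)$ denote the unique index with $p_{A_{\ell\ell}}(\lambda) = 0$.

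To construct an eigenvector for $\lambda$, I would pick any eigenvector $u \in \C^{k_\ell}$ of $A_{\ell\ell}$ for $\lambda$ (a one-dimensional space), set $v_\ell := u$ and $v_i := 0$ for $i > \ell$, and then back-substitute. Reading the equation $(A - \lambda I)v = 0$ block by block yields
$$
(A_{ii} - \lambda I) v_i = -\sum_{j=i+1}^{\ell} A_{ij} v_j \qquad (1 \le i \le \ell - 1),
$$
and since $A_{ii} - \lambda I$ is invertible for $i < \ell$ by the disjointness of the spectra of the diagonal blocks, this recursion uniquely determines $v_{\ell-1}, \ldots, v_1$. The resulting eigenvector $v(\lambda)$ is supported in rows $1, \ldots, K_\ell$, where $K_\ell := k_1 + \cdots + k_\ell$. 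Now order the eigenvalues so that those of $A_{\ell\ell}$ occupy columns $K_{\ell-1}+1, \ldots, K_\ell$, and assemble the corresponding vectors $v(\lambda)$ into the columns of a matrix $T$. By construction $T \in \mathcal{A}$, and its $\ell$-th diagonal block consists of $k_\ell$ eigenvectors of $A_{\ell\ell}$ for pairwise distinct eigenvalues, hence is invertible; therefore $T \in \mathcal{A}^\times$ and $T^{-1} A T$ is diagonal.

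For the additional assertion, I would first observe that $A \leftrightarrow E_{ss}$ forces the $s$-th row and $s$-th column of $A$ to vanish off the $(s,s)$ entry, so $e_s$ is an eigenvector of $A$ with eigenvalue $a_{ss}$. For any other eigenvalue $\lambda \ne a_{ss}$ and any eigenvector $v$ for $\lambda$, computing the $s$-th coordinate of $Av = \lambda v$ gives $a_{ss} v_s = \lambda v_s$, hence $v_s = 0$. Arranging the ordering above so that $a_{ss}$ sits in position $s$ (which is legitimate since $s$ lies in the block range corresponding to $\ell(a_{ss})$) and taking the $s$-th column of $T$ to be exactly $e_s$, we obtain $t_{ss} = 1$, $t_{is} = 0$ for $i \ne s$, and $t_{sj} = 0$ for $j \ne s$, which is precisely $T \leftrightarrow E_{ss}$. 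The main obstacle is the back-substitution step, where the block structure of $\mathcal{A}$ must be tracked carefully to ensure that the eigenvector supports (and hence $T$ itself) stay within the required block index ranges.
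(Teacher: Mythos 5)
Your proof is correct, and it takes a genuinely different route from the paper's. You build the similarity explicitly: using that the diagonal blocks $A_{\ell\ell}$ have pairwise disjoint spectra, you construct for each eigenvalue an eigenvector by block back-substitution, observe that it is supported in the first $k_1+\cdots+k_\ell$ rows, and assemble these into a column matrix $T\in\mathcal{A}^\times$; the commutation claim is then a direct coordinate computation ($A\leftrightarrow E_{ss}$ forces the $s$-th row and column of $A$ to be diagonal, so $e_s$ is an eigenvector and every eigenvector for a different eigenvalue has vanishing $s$-th coordinate, whence the $s$-th row and column of $T$ can be taken to be $e_s^t$ and $e_s$). The paper instead argues by induction on $n$: it splits $\mathcal{A}$ into a $2\times 2$ block form, diagonalizes the two diagonal parts by the induction hypothesis, and removes the off-diagonal block by solving a Sylvester equation $XD_2-D_1X=A_{12}$ (unique solvability again coming from disjointness of the spectra), with the extra condition $T\leftrightarrow E_{ss}$ obtained by solving the Sylvester equation once more for $E_{ii}X$ or $XE_{jj}$. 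Both arguments ultimately rest on the same disjoint-spectra fact; yours is more explicit and self-contained (it exhibits the eigenvector basis and its support structure directly), while the paper's is more structural and recursive, which fits the inductive style used throughout the rest of the paper. The only point worth spelling out in your write-up is the one-line reason why $a_{ss}$ is an eigenvalue of the diagonal block containing the index $s$ (namely, the $s$-th row and column of that block are diagonal as well), which is what legitimizes placing it in column $s$ of your ordering; with that remark included, the argument is complete.
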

	\begin{proof}
		If $\mathcal{A}=M_n$, the first statement is well-known and for $n=1$, there is nothing to do.  If $A\in  M_n$, $n\ge2$, commutes with $E_{11}$,  then 
		\begin{equation*}
			A=\begin{bmatrix}
				1 & 0 \\ 0 & A_{22}
			\end{bmatrix} =
			\begin{bmatrix}
				1 & 0 \\ 0 & S_2
			\end{bmatrix}
			\begin{bmatrix}
				1 & 0 \\ 0 & D_2
			\end{bmatrix}
			\begin{bmatrix}
				1 & 0 \\ 0 & S_2
			\end{bmatrix}^{-1},
		\end{equation*}
		where $A_{22},S_2,D\in M_{n-1}$, $D_2$ is diagonal and $S_2$ invertible. Thus the second claim has been verified. If for some $s\neq 1$, $A\leftrightarrow E_{ss}=PE_{11}P^t$, where $P$ is a suitable permutation matrix, then $P^tAP \leftrightarrow E_{11}$ and by the above argument, $P^tAP = TDT^{-1}$ for some invertible $T$ commuting with $E_{11}$ which has $1$ in the $(s,s)$-position. Now, 
		\begin{equation*}
			A= PTDT^{-1}P^t=(PTP^t)PDP^t (PTP^t)^{-1}	
		\end{equation*}
		where $PDP^t $ is a diagonal matrix with only permuted diagonal entries. It is a straightforward verification that $PTP^t$ commutes with $E_{ss}$ and has $1$ in the $(s,s)$-position.
		
		\smallskip 
		
		Let us further have $\mathcal{A}\subsetneq M_n$, $n\geq 2$.  We continue by induction on $n$. Suppose that the statements of the lemma are true for any $ 1\le m<n$.  The algebra $\mathcal{A}$ can be split so that any $A\in \mathcal{A}$  can be  written
		\begin{equation}\label{eq:x100}
			A=\begin{bmatrix}
				A_{11} & A_{12} \\
				0    & A_{22}
			\end{bmatrix},
		\end{equation}
		where $A_{11}\in M_{k_1}$, $A_{12} \in M_{k_1,n-k_1}$, and $A_{22}$ is a member of a block-upper-triangular subalgebra $\mathcal{A}_2$ of $M_{n-k_1}$. By the induction hypothesis, there exist invertible matrices $S_1\in  M_{k_1}$ and $S_2\in \mathcal{A}_2$, as well as diagonal matrices $D_1\in M_{k_1}$, $D_2\in \mathcal{A}_2$ such that $A_{11} =S_1D_1S_1^{-1}$ and $A_{22}=S_2D_2S_2^{-1}$ such that
		\begin{equation*}
			A=\begin{bmatrix}
				S_{1} & 0 \\
				0    & S_2
			\end{bmatrix}
			\begin{bmatrix}
				D_{1} & S_1^{-1}A_{12}S_2 \\
				0    & D_2
			\end{bmatrix}
			\begin{bmatrix}
				S_{1}^{-1}& 0 \\
				0    & S_2^{-1}
			\end{bmatrix}.
		\end{equation*}
		If $A$ commutes with some $E_{ss}$, then either $A_{11}\leftrightarrow E_{ii}\in M_{k_1}$ or  $A_{22}\leftrightarrow E_{jj}\in \mathcal{A}_2$ for some $j$. By the above argument in the first case or, by the induction hypothesis in the second case, either $S_1\leftrightarrow E_{ii}$  and $e_i^tS_1e_i=1$ or, $S_2\leftrightarrow E_{jj}$ with $e_j^tS_2e_j=1$.  Therefore, the block diagonal matrix $\diag(S_1, S_2) $ commutes with $ E_{ss}$ and so, there is no loss of generality in assuming that $S=I$.\smallskip
		
		Now notice that the first claim would be proved if we could write \begin{equation}\label{eq:99}
			\begin{bmatrix}
				D_{1} & A_{12}\\
				0    & D_2
			\end{bmatrix} = \begin{bmatrix}
				D_{1} & XD_2-D_1X\\
				0    & D_2
			\end{bmatrix} = 
			\begin{bmatrix}
				I_{k_1} & X\\
				0    & I_{n-k_1}
			\end{bmatrix}
			\begin{bmatrix}
				D_{1} & 0\\
				0    & D_2
			\end{bmatrix}
			\begin{bmatrix}
				I_{k_1} & X\\
				0    & I_{n-k_1}
			\end{bmatrix}^{-1}.
		\end{equation}
		for some $X \in M_{k_1,n-k_1}$. The equation $XD_2-D_1X=A_{12}$ in $X \in M_{k_1,n-k_1}$ is a Sylvester equation \cite{Horn-Johnson} and, since $\sigma(D_1)\cap \sigma(D_2)$ is empty, this equation has a unique solution $X$. To show the second claim, it remains to see that the $i$-th row ($j$-th column, resp.) of $X$ is equal to zero. We will see that with respect to the two cases, $E_{ii}X=0$ or  $XE_{jj}=0$. Note that \begin{align} \label{eq:x101}
			\begin{bmatrix}
				D_{1} & A_{12}\\
				0    & D_2
			\end{bmatrix} \begin{bmatrix}
				E_{ii} & 0\\
				0    & 0
			\end{bmatrix} - \begin{bmatrix}
				E_{ii} & 0\\
				0    & 0
			\end{bmatrix}\begin{bmatrix}
				D_{1} & A_{12}\\
				0    & D_2
			\end{bmatrix} &=\begin{bmatrix}
				0 & -E_{ii}A_{12}\\
				0    & 0 
			\end{bmatrix},   \\   
			\begin{bmatrix} \label{eq:x102}
				D_{1} & A_{12}\\
				0    & D_2
			\end{bmatrix} \begin{bmatrix}
				0 & 0\\
				0    & E_{jj}
			\end{bmatrix} - 
			\begin{bmatrix}
				0 & 0\\
				0    & E_{jj}
			\end{bmatrix}\begin{bmatrix}
				D_{1} & A_{12}\\
				0    & D_2
			\end{bmatrix} &=\begin{bmatrix}
				0 & A_{12}E_{jj}\\
				0    & 0
			\end{bmatrix}.  
		\end{align}
		If $A\leftrightarrow E_{ii}$, then from \eqref{eq:x101} we observe that
		\begin{equation*}
			0=E_{ii}A_{12} =E_{ii}(XD_2-D_1X)=(E_{ii}X)D_2-D_1(E_{ii}X).
		\end{equation*}
		Solving the Sylvester equation again, this time for $E_{ii}X$, we get $E_{ii}X=0$ as desired. In the second case, from \eqref{eq:x102} we get
		\begin{equation*}
			0=A_{12}E_{jj} =(XD_2-D_1X)E_{jj}=(XE_{jj})D_2-D_1(XE_{jj}),
		\end{equation*}
		providing $XE_{jj}=0$.
	\end{proof}

	\smallskip
	
	At the end of this preliminary section, we state a few basic properties of Jordan homomorphisms. For an algebra $\mathcal{A}$, as usual, we denote the commutator of $a,b \in \mathcal{A}$ as $[a,b] = ab-ba$. Proofs of (a), (b), and (c) of the following remark are elementary and can be found in \cite{JacobsonRickart}. 
	
	\begin{remark}\label{le:Jordan homomorphism basic properties}
		Let $\phi : \mathcal{A} \to \mathcal{B}$ be a Jordan homomorphism between complex algebras $\mathcal{A}$ and $\mathcal{B}$. We have:
		\begin{enumerate}[(a)]
			\item $\phi(aba) = \phi(a)\phi(b)\phi(a)$  for all $a,b \in \mathcal{A}$.
			\item $\phi([[a,b],c]) = [[\phi(a),\phi(b)],\phi(c)]$ for all $a,b,c \in \mathcal{A}$.
			\item $\phi([a,b]^2) = [\phi(a),\phi(b)]^2$ for all $a,b \in \mathcal{A}$.
			\item Obviously $\phi$ preserves idempotents, i.e.\ for any idempotent $a \in \mathcal{A}$, the element $\phi(a)$ is an idempotent in $\mathcal{B}$.
			
			\item If the image of $\phi$ has a trivial commutant in $\mathcal{B}$ (i.e.\ if $b \in \mathcal{B}$ commutes with every element of the image of $\phi$, then $b$ is zero or a multiple of the unity in $\mathcal{B}$, if it exists), then by (b) and (c) one easily sees that $\phi$ preserves commutativity.
			\item Suppose $\mathcal{A}$ and $\mathcal{B}$ are algebras with unities $1_\mathcal{A}$ and $1_{\mathcal{B}}$, respectively. Let $\phi : \mathcal{A} \to \mathcal{B}$ be a Jordan homomorphism such that $1_{\mathcal{B}}$ is in the image of $\phi$. Then $\phi$ is unital and preserves inverses in the sense that for every $a \in \mathcal{A}^\times$ we have $\phi(a) \in \mathcal{B}^\times$ and $\phi(a^{-1}) = \phi(a)^{-1}$ (\cite[Thm 2.5]{Fosner}, in Slovenian). Indeed, suppose that $a_0 \in \mathcal{A}$ satisfies $\phi(a_0) = 1_\mathcal{B}$. Then
			$$2\cdot 1_\mathcal{B} = \phi(a_0 + a_0) = \phi(1_{\mathcal{A}} \circ a_0) = \phi(1_{\mathcal{A}}) \circ \phi(a_0) = \phi(1_{\mathcal{A}}) \circ 1_\mathcal{B} = 2\phi(1_{\mathcal{A}})$$
			which implies $\phi(1_{\mathcal{A}}) = 1_\mathcal{B}$ so $\phi$ is unital.
			Let $a \in \mathcal{A}^\times$ be arbitrary. Then (a) gives
			$$\phi(a) = \phi(aa^{-1}a) = \phi(a)\phi(a^{-1})\phi(a).$$
			If we set $p_1 := \phi(a)\phi(a^{-1})$ and $p_2 := \phi(a^{-1})\phi(a)$, it is immediate that $p_1$ and $p_2$ are idempotents in $\mathcal{B}$. Furthermore, we have
			$$p_1+p_2 = \phi(a)\circ \phi(a^{-1}) = 2\phi(a \circ a^{-1}) = 2\phi(1_{\mathcal{A}}) = 2\cdot 1_{\mathcal{B}}$$
			and hence $2\cdot 1_{\mathcal{B}}-p_1 = p_2$ is idempotent as well. It follows that $2(p_1-1_{\mathcal{B}}) = 0$ so we conclude $p_1 = 1_{\mathcal{B}}$ and then $p_2 = 1_\mathcal{B}$ as well. Therefore, $\phi(a)$ is invertible in $\mathcal{B}$, with the inverse being equal to $\phi(a^{-1})$. In particular, all linear unital Jordan homomorphisms between unital algebras are spectrum preserving.
		\end{enumerate}
	\end{remark}

	\section{Block upper-triangular subalgebras and their Jordan embeddings}\label{sec:block upper-triangular subalgebras and their Jordan embeddings}
	
	We start this section by proving Theorem \ref{thm:Jordan monomorphisms on A}.

	\begin{proof}[Proof of Theorem \ref{thm:Jordan monomorphisms on A}]
		We prove the theorem by induction on $n$. For $n=1$ the statement is clear. So suppose it holds for all $k$,  $1\le k < n$, and let $\mathcal{A}\subseteq M_n$ be a block upper-triangular algebra. As $\phi$ is a Jordan homomorphism, it preserves idempotents. For every distinct $i,j\in\{1,2,\ldots ,n\}$ we have
		$0=\phi(E_{ii}\circ E_{jj})=\phi(E_{ii})\circ \phi(E_{jj})$ and so, $\phi(E_{ii})$, $i=1,2,\dots, n$, is a family of $n$ pairwise orthogonal idempotents. This follows from a general fact: if $P$, $Q \in M_n$ are idempotents such that $P\circ Q = 0$, then $P \perp Q$. Indeed, we have $PQ+QP = 0$, so by multiplying this by $P$ from both sides, one gets $PQP = 0$. As $QP = -PQ$, this implies $PQ =-PQP = 0$. Then also $PQ = -QP = 0$. It follows that there exists an invertible matrix $S$ such that $\phi(E_{ii})=SE_{ii}S^{-1}$ for all $1 \le i \le n$. With no loss of generality we assume that $\phi(E_{11})=E_{11}$ and in turn,  $SE_{11}=E_{11}S$. For every matrix $A\in \mathrm{span}\{E_{ij} : 2 \le i,j\le n\}\cap \mathcal{A}$ we have that $A \circ E_{11}=0$ and hence $\phi(A)\circ E_{11}=0$, which implies $\phi(A) \perp E_{11}$. So, we can define a Jordan embedding $\psi: \mathcal{A}_{n-1} \to M_{n-1}$, where $\mathcal{A}_{n-1} \subseteq  M_{n-1}$ is a block upper-triangular subalgebra, by 
		$$\phi \left(\begin{bmatrix}
			0 & 0\\
			0 & A
		\end{bmatrix}\right) =\begin{bmatrix}
			0 & 0\\
			0 & \psi(A)
		\end{bmatrix}.$$
		Now we apply the induction hypothesis to get that there exists an invertible $T\in M_{n-1}$ such that either $\psi(A)=TAT^{-1}$ for all $A\in \mathcal{A}_{n-1}$ or, $\psi(A)=TA^{t}T^{-1}$ for all $A\in \mathcal{A}_{n-1}$. By passing to the map $\diag(1,T)^{-1}\phi(\,\cdot\,)\diag(1,T)$, we can further take $T$ to be the identity matrix in $M_{n-1}$ and therefore, $\phi$ fixes all diagonal matrices. In particular, $\phi(E_{ii})=E_{ii}$, for all $1\le i \le n$. By linearity, it suffices to check that there is an invertible (diagonal in fact by our last assumption) matrix $T\in M_n$ such that $\phi(E_{ij}) = TE_{ij}T^{-1}$ for all $1 \le i,j \le n$ or, $\phi(E_{ij}) = TE_{ji}T^{-1}$ for all $1 \le i,j \le n$. \smallskip
		
		Suppose that $E_{ij}\in\mathcal{A}$, $i\neq j$. We claim that $\phi(E_{ij})=\alpha_{ij}E_{ij}+\beta_{ij}E_{ji}$,   where exactly one of $\alpha_{ij}$ and $\beta_{ij}$ is non-zero. When $n>2$, we first use that for every $k\notin \{i,j\}$, $0=\phi(E_{ij}\circ E_{kk})=\phi(E_{ij})\circ E_{kk}$. So, for $n\geq 2$ we have   $\phi(E_{ij})\in \mathrm{span}\{E_{ii},E_{ij},E_{ji},E_{jj}\}$. From $0=\phi(E_{ij}\circ E_{ij})=2\phi(E_{ij})^2$ we see that $\phi(E_{ij})$ is a rank-one nilpotent as $\phi$ is injective.
		Now, the claim follows from
		$$\phi(E_{ij})=\phi(E_{ij}\circ E_{ii})=\phi(E_{ij})\circ E_{ii},$$
		since this means that $\phi(E_{ij})$ is supported in the $i$-th row and column. Therefore, $\phi(E_{ij})_{jj} = 0$ and hence the other eigenvalue $\phi(E_{ij})_{ii}$ also has to be zero.\smallskip
		
		So, further suppose $n\ge 3$. If $\psi(A) = A$ for every $A\in \mathcal{A}_{n-1}$ then we claim that $\phi(E_{1j})=\alpha_{1j}E_{1j}$ for all $1 < j \le n$. If not, first suppose that $\phi(E_{1k})=\beta_{1k}E_{k1}$ for some $1<k<n$.  Then $\phi(E_{1k}\circ E_{kn})=\phi(E_{1n})$ but $\phi(E_{1k})\circ \phi(E_{kn})=\beta_{1k}E_{k1}\circ E_{kn}=0$, a contradiction. Secondly, the case $\phi(E_{1n}) = \beta_{1n}E_{n1}$ can be eliminated by comparing $\phi(E_{1n}\circ E_{1,n-1})=0$ and $\phi(E_{1n})\circ \phi(E_{1,n-1})=\beta_{1n}E_{n1} \circ E_{1,n-1}=\beta_{1n}E_{n,n-1}$. By a diagonal similarity implemented by $\diag(1,\alpha_{12},\ldots,\alpha_{1n}) \in \mathcal{D}_n^\times$, we can achieve that $\phi(E_{1j})=E_{1j}$ for every $j=2,\dots, n$.
		
		\smallskip
		If $\mathcal{A}=\mathcal{T}_n$, we are done, else, for every $E_{j1}\in \mathcal{A}$ with $2 \le j \le n$,  we have that $\phi(E_{1j}\circ E_{j1})=E_{11}+E_{jj}$, so $E_{1j}\circ (\alpha_{j1}E_{j1}+\beta_{j1}E_{1j})=\alpha_{j1}(E_{11}+E_{jj})$	 gives $\alpha_{j1}=1$ and hence  $\beta_{j1}=0$.\smallskip
		
		The antimultiplicative case of $\psi$ can be considered by replacing the map $\phi$ with the map $\phi(\cdot)^t$. Finally, the linearity of $\phi$ closes the proof.
	\end{proof}

	\begin{corollary}\label{cor:Jordan monomorphism unified}
		Let $\mathcal{A}$ and $\mathcal{B}$ be block-upper-triangular subalgebras of $M_n$. Suppose that $\phi : \mathcal{A} \to \mathcal{B}$ is a Jordan embedding. Then one of the following is true:
		\begin{enumerate}[(a)]
			\item $\mathcal{A} \subseteq \mathcal{B}$ and there exists $T \in \mathcal{B}^{\times}$ such that $\phi(X) = TXT^{-1}$ for all $X \in \mathcal{A}$,
			\item $\mathcal{A}^{\symm} \subseteq \mathcal{B}$ and there exists $T \in \mathcal{B}^{\times}$ such that $\phi(X) = TX^{\symm}T^{-1}$ for all $X \in \mathcal{A}$.
		\end{enumerate}
	\end{corollary}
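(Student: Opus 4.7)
The plan is to reduce the corollary to Theorem \ref{thm:Jordan monomorphisms on A} and then pin down the conjugating matrix by analyzing the invariant-subspace structure of block upper-triangular algebras.

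Composing $\phi$ with the inclusion $\mathcal{B}\hookrightarrow M_n$ and applying Theorem \ref{thm:Jordan monomorphisms on A} produces $T_0\in M_n^\times$ with either $\phi(X)=T_0 X T_0^{-1}$ or $\phi(X)=T_0 X^t T_0^{-1}$ for all $X\in\mathcal{A}$. In the second case I will use Lemma \ref{le:the J maneuver} to rewrite this as $\phi(X)=(T_0 J)X^\symm(T_0 J)^{-1}$, observing that $\{X^\symm : X\in\mathcal{A}\}=\mathcal{A}^\symm$ is again block upper-triangular. Both cases then reduce to a single claim: if $T\in M_n^\times$ and $\mathcal{C}\subseteq M_n$ is a block upper-triangular subalgebra with $T\mathcal{C}T^{-1}\subseteq\mathcal{B}$, then $\mathcal{C}\subseteq\mathcal{B}$ and $T\in\mathcal{B}^\times$ (to be applied with $\mathcal{C}\in\{\mathcal{A},\mathcal{A}^\symm\}$ and $T\in\{T_0,T_0 J\}$).

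To prove this claim I will first show that the lattice of invariant subspaces of $\mathcal{C}=\mathcal{A}_{k_1,\ldots,k_r}$ in $\C^n$ is exactly the standard partial flag $0\subsetneq F_1^{\mathcal{C}}\subsetneq\cdots\subsetneq F_r^{\mathcal{C}}=\C^n$ with $F_j^{\mathcal{C}}:=\spn\{e_1,\ldots,e_{k_1+\cdots+k_j}\}$ (and analogously for $\mathcal{B}$). This comes from a short direct computation with the units $E_{ij}\in\mathcal{C}$: for a nonzero $\mathcal{C}$-invariant $W$, letting $j$ be the largest block index touched by any vector of $W$, suitable units force $F_j^{\mathcal{C}}\subseteq W$, while every element of $W$ already lies in $F_j^{\mathcal{C}}$.

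Given this, from $T\mathcal{C}T^{-1}\subseteq\mathcal{B}$ each $F_i^{\mathcal{B}}$ is $T\mathcal{C}T^{-1}$-invariant and hence equals $TF_{\sigma(i)}^{\mathcal{C}}$ for a unique strictly increasing $\sigma\colon\{0,\ldots,s\}\to\{0,\ldots,r\}$ with $\sigma(0)=0$ and $\sigma(s)=r$. Comparing dimensions yields $l_1+\cdots+l_i=k_1+\cdots+k_{\sigma(i)}$, which says exactly that $(k_j)$ refines $(l_i)$, hence $\mathcal{C}\subseteq\mathcal{B}$. But then $F_{\sigma(i)}^{\mathcal{C}}$ and $F_i^{\mathcal{B}}$ coincide as standard coordinate subspaces, so $T F_i^{\mathcal{B}}=F_i^{\mathcal{B}}$ for every $i$; since $\mathcal{B}$ is the full stabilizer in $M_n$ of its defining flag, this gives $T\in\mathcal{B}^\times$. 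The only step requiring real care is the invariant-subspace description of $\mathcal{C}$; everything else is bookkeeping.
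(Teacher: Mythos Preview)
Your proposal is correct and takes a genuinely different route from the paper's proof. The paper proceeds by induction on $n$: after invoking Theorem \ref{thm:Jordan monomorphisms on A} and Lemma \ref{lem:diagonalizes within A} to write $T=SPD$ with $S\in\mathcal{B}^\times$, $P$ a permutation matrix and $D$ diagonal, it tracks where the first row of $\mathcal{A}$ lands, arranges $\pi(1)=1$ by a permutation inside the first block of $\mathcal{B}$, and then peels off that row and column to apply the inductive hypothesis to a smaller block upper-triangular algebra. Your argument instead exploits the well-known identification of $\mathcal{A}_{k_1,\ldots,k_r}$ with the stabilizer of the standard partial flag $(F_j)$: once you know the $\mathcal{C}$-invariant subspaces are exactly $\{0,F_1^{\mathcal{C}},\ldots,F_r^{\mathcal{C}}\}$, the inclusion $T\mathcal{C}T^{-1}\subseteq\mathcal{B}$ forces $T^{-1}$ to carry the flag of $\mathcal{B}$ into the flag of $\mathcal{C}$, and a dimension count simultaneously yields both the refinement $\mathcal{C}\subseteq\mathcal{B}$ and $T\in\mathcal{B}^\times$. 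This is cleaner and avoids induction entirely; the paper's approach, on the other hand, stays closer to the matrix-unit machinery already developed for Theorem \ref{thm:Jordan monomorphisms on A} and does not require the (standard, but not stated in the paper) fact that $\mathcal{B}$ is the \emph{full} stabilizer of its flag.
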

	\begin{proof}
		We will show that $E_{ij}\in\mathcal{A}$ implies $E_{ij}\in\mathcal{B}$ for all $E_{ij}\in\mathcal{A}$ or, $E_{ij}\in\mathcal{A}^\odot$ implies $E_{ij}\in\mathcal{B}$ for all $E_{ij}\in\mathcal{A}^\odot$.  We again use mathematical induction on $n$. There is nothing to do when $n=1$, so for the induction step, assume $n\ge 2$ and that the claim already holds for $n-1$. Let  $\mathcal{A}=\mathcal{A}_{k_1,\dots,k_p}$ and $\mathcal{B}=\mathcal{A}_{l_1,\dots,l_q}$.
		
		\smallskip 
		Assume that $\phi$ is of the form $\phi(A)=TAT^{-1}$ for some fixed invertible matrix $T\in M_n$.

		By Lemma \ref{lem:diagonalizes within A} there exists an $S\in \mathcal{B}^\times$ and a diagonal matrix $\Delta^\prime$, having  the diagonal entries of $\Delta$  possibly permuted, such that $T\Delta T^{-1}=\phi(\Delta)=S\Delta^\prime S^{-1}$. By the commutativity preserving, $ S^{-1}\phi(E_{ii})S\leftrightarrow \Delta^\prime$ for each $ i=1,2,\dots,n$, and so, $\{ S^{-1}\phi(E_{ii})S: i=1,2,\dots,n\}$ is a family of pairwise commuting diagonal rank-one idempotent matrices. It follows that $\phi(E_{ii})=SE_{\pi(i)\pi(i)}S^{-1}$ for some permutation $\pi$ of the set $\{1,2,\dots,n\}$. There is a unique permutation matrix $P\in M_n$ such that $PE_{ii}P^t=E_{\pi(i)\pi(i)}=Pe_i(Pe_i)^t$ for all $1 \le i \le n$. Then 
		\begin{equation*}
			T\Delta T^{-1}=\phi(\Delta)= \phi\left(\sum_{i=1}^n iE_{ii}\right)= S \left(\sum_{i=1}^n iPE_{ii}P^t\right) S^{-1} =SP\Delta (SP)^{-1}
		\end{equation*}
		and consequently, $(SP)^{-1}T$ commutes with $\Delta$, whence we infer that $T=SPD$ for some invertible diagonal matrix $D=\diag(d_1,\dots,d_n)$.  This implies that 
		\begin{align*}
			\phi(E_{ij}) &= TE_{ij}T^{-1}=SPDE_{ij}D^{-1}P^tS^{-1}=S\left(\frac{d_i}{d_j}Pe_i(Pe_j)^t\right) S^{-1} \\
			&=\frac{d_i}{d_j} S E_{\pi(i)\pi(j)}S^{-1}
		\end{align*}	
		for every $1\le i,j \le n$ such that $E_{ij} \in \mathcal{A}$.

		As $S\in\mathcal{B}^\times$, replacing $\phi$ by $X\mapsto S^{-1}\phi(X)S$  we can further suppose that $S=I$ and $\phi(E_{ij})=d_i/d_j E_{\pi(i)\pi(j)} $ for all $i,j$.  For each fixed $i$,  $1\le i\le k_1$, $\spn\{E_{ij} : j=1,\dots, n\}$ is an $n$-dimensional vector space of rank-one matrices being supported in only one row, which is mapped into $\spn\{E_{\pi(i)s}: s=1,\dots, n\}$. As $\phi$ is linear and injective, we have $\pi(i)\in \{1,2,\dots,l_1\}$ where $l_1\ge k_1$. Let $Q \in M_n$ be the permutation matrix which swaps $1$ and $\pi(i)$. Since $1 \le \pi(i) \le l_1$, we have $Q \in \mathcal{B}$. Therefore, by conjugating the map $\phi$ by $Q$, we can achieve that all matrices supported only in the first row are mapped by $\phi$ to the matrices of the same kind, i.e.\ $\pi(1)=1$ with no loss of generality. For all $E_{i1}\in \mathcal{A}$  we have that $\phi(E_{i1})=d_i/d_1E_{\pi(i)1}\in \mathcal{B}$ since $\pi(i)\le l_1$.
		
		\smallskip
		Now we can define a Jordan embedding $\psi: \mathcal{A}^\prime \to \mathcal{B}^\prime$, where $\mathcal{A}^\prime ,\, \mathcal{B}^\prime  \subseteq M_{n-1}$ are block upper-triangular algebras, obtained by omitting the first row and the first column in $\mathcal{A}$ and $\mathcal{B}$, by 
		$$\phi \left(\begin{bmatrix}
			0 & 0\\
			0 & A
		\end{bmatrix}\right) =\begin{bmatrix}
			0 & 0\\
			0 & \psi(A)
		\end{bmatrix} \in \mathcal{B},\ \ \ A\in\mathcal{A}^\prime,$$
		which, by the induction hypothesis, gives that $\mathcal{A}^\prime\subseteq \mathcal{B}^\prime$ so, for every  $E_{ij} \in \mathcal{A}$ we have that $E_{ij} \in \mathcal{B}$  for all $i,j$ such that $i,j\neq 1$. By linearity and the fact that $\phi$ also maps the matrices supported in the first row and those supported in the first column to matrices of the same form, we deduce $\mathcal{A}\subseteq \mathcal{B}$. Additionally, applying $\psi(A) =T^\prime A {T^\prime}^{-1}$ gives that  
		\begin{equation*}
			\begin{bmatrix}
				1 & 0\\
				0 & {T^\prime}^{-1}
			\end{bmatrix}
			T\begin{bmatrix}
				0 & 0\\
				0 & A
			\end{bmatrix} 
			=
			\begin{bmatrix}
				0 & 0\\
				0 & A
			\end{bmatrix}
			\begin{bmatrix}
				1 & 0\\
				0 &{T^\prime}^{-1}
			\end{bmatrix}T,\ \ 	\text{for every } A\in \mathcal{A}^\prime, 
		\end{equation*}
		where $T^\prime\in {\mathcal{B}^\prime}^\times$. The commutativity relation above provides
		$$
		T=\begin{bmatrix}
			1 & 0\\
			0 &T^\prime
		\end{bmatrix} D
		$$
		for some invertible diagonal matrix $D\in \mathcal{B}$. We have thus obtained that $T\in \mathcal{B}^\times$.
		
		\smallskip 
		For the remaining case, when $\phi$ is antimultiplicative, we may instead consider the multiplicative linear map $A\mapsto \phi(A^\odot) $, $A\in \mathcal{A}^\odot$, which ends the proof by applying the above consideration. 
	\end{proof}

	The next simple example shows that the image of a Jordan embedding $\phi: \mathcal{A}\to M_n$, where $\mathcal{A}$ is a block-upper-triangular subalgebra of $M_n$, needs not to be a block-upper-triangular subalgebra of $M_n$.
	\begin{example}
		Consider the algebra monomorphism $\phi : \mathcal{T}_3 \to M_3$ given by $\phi(X) = JXJ^{-1}$ where $J \in M_3^\times$ is from Lemma \ref{le:the J maneuver}.
		The image of $\phi$ is precisely the algebra of $3 \times 3$ lower-triangular matrices, hence not equal to $\mathcal{T}_3$ nor $\mathcal{T}_3^{\odot} = \mathcal{T}_3$.
	\end{example}

		\begin{corollary}\label{cor:when are block upper-triangular algebras isomorphic}
			Let $\mathcal{A}=\mathcal{A}_{k_1,\ldots,k_p}$ and $\mathcal{B}=\mathcal{A}_{l_1,\ldots,l_q}$ be block upper-triangular subalgebras of $M_n$.
			\begin{enumerate}
				\item[(a)] $\mathcal{A}$ and $\mathcal{B}$ are algebra-isomorphic if and only if  $(k_1,\ldots,k_p) = (l_1,\ldots,l_q)$.
				\item[(b)] $\mathcal{A}$ and $\mathcal{B}$ are algebra-antiisomorphic if and only if $(k_1,\ldots,k_p) = (l_q,\ldots,l_1)$.
				\item[(c)] $\mathcal{A}$ and $\mathcal{B}$ are Jordan-isomorphic if and only if $(k_1,\ldots,k_p) = (l_1,\ldots,l_q)$ or  $(k_1,\ldots,k_p)=(l_q,\ldots,l_1)$.
			\end{enumerate}
		\end{corollary}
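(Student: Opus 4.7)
The plan is to reduce all three equivalences to the earlier results, namely Theorem~\ref{thm:Jordan monomorphisms on A}, Corollary~\ref{cor:Jordan monomorphism unified}, and Lemma~\ref{le:the J maneuver}, combined with the elementary observation that the block tuple $(k_1,\dots,k_p)$ is uniquely recoverable from $\mathcal{A}_{k_1,\dots,k_p}$ as a subset of $M_n$ (the forced zero entries determine the staircase, hence the partial sums). The ``if'' directions are all immediate: for (a) equal tuples give $\mathcal{A} = \mathcal{B}$ so the identity works; for (b) the reversed tuple condition gives $\mathcal{A}^\symm = \mathcal{B}$, so by Lemma~\ref{le:the J maneuver} the map $X \mapsto X^\symm$ is an antiisomorphism; (c) then follows by combining these two constructions (both are Jordan).

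For the converse of (a), I would take an algebra isomorphism $\phi : \mathcal{A} \to \mathcal{B}$. Since $\phi$ is in particular a Jordan embedding, Corollary~\ref{cor:Jordan monomorphism unified} puts it into one of two alternatives. In the antimultiplicative alternative $\phi(X) = TX^\symm T^{-1}$ the map $\phi$ would be simultaneously multiplicative and antimultiplicative, so by injectivity $\mathcal{A}$ would have to be commutative; but $E_{11}$ and $E_{12}$ lie in $\mathcal{A}$ and fail to commute for $n \ge 2$, ruling this out. Hence $\phi(X) = TXT^{-1}$ and $\mathcal{A} \subseteq \mathcal{B}$, and applying the same argument to $\phi^{-1}$ yields $\mathcal{B} \subseteq \mathcal{A}$. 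Thus $\mathcal{A} = \mathcal{B}$ and the tuples coincide; the case $n=1$ is trivial.

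For the converse of (b), given an antiisomorphism $\phi : \mathcal{A} \to \mathcal{B}$, I would set $\psi(X) := \phi(X^\symm)$ to obtain a genuine algebra isomorphism $\mathcal{A}^\symm \to \mathcal{B}$ (the two antimultiplicative pieces compose to a multiplicative map), and then apply (a) to conclude $\mathcal{A}^\symm = \mathcal{B}$, which is exactly $(k_1,\dots,k_p) = (l_q,\dots,l_1)$. For the converse of (c), Theorem~\ref{thm:Jordan monomorphisms on A} forces any Jordan isomorphism to be either $X \mapsto TXT^{-1}$ (an algebra isomorphism, so (a) applies) or $X \mapsto TX^tT^{-1}$ (an algebra antiisomorphism, so (b) applies).

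The main point requiring care is the ``commutativity contradiction'' step in the proof of (a), which is what cleanly separates the multiplicative and antimultiplicative branches of Corollary~\ref{cor:Jordan monomorphism unified}; once this is in hand, the rest of the argument is structural and does not involve any new computation.
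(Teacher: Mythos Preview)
Your proof is correct and follows essentially the same approach as the paper: both reduce the nontrivial implications to Corollary~\ref{cor:Jordan monomorphism unified} and then apply the argument to $\phi^{-1}$ to upgrade the inclusion to equality. Your explicit commutativity argument to exclude the antimultiplicative branch in (a), and your reduction of (b) to (a) via precomposition with $\symm$, are minor cosmetic variations on the paper's slightly terser presentation, which applies Corollary~\ref{cor:Jordan monomorphism unified} directly in each case.
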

		\begin{proof}
			We write the proofs only for the nontrivial implications.
			\smallskip
			
			(a) 
			Let $\phi :\mathcal{A} \to \mathcal{B}$ be an algebra  isomorphism. Then  $\phi$ is a multiplicative Jordan isomorphism and Corollary \ref{cor:Jordan monomorphism unified} implies $\mathcal{A} \subseteq \mathcal{B}$. As the same argument applies to the (multiplicative) map $\phi^{-1}$, we obtain $\mathcal{A} = \mathcal{B}$, i.e.\ $(k_1,\ldots,k_p) = (l_1,\ldots,l_q)$.\smallskip

			(b) For an algebraic antiisomorphism $\phi:\mathcal{A} \to \mathcal{B}$ which is antimultiplicative, Corollary \ref{cor:Jordan monomorphism unified} implies $\mathcal{A}^{\odot} \subseteq \mathcal{B}$. Applying the same argument to $\phi^{-1}$ yields $\mathcal{B}^{\odot} \subseteq \mathcal{A}$. Since the map $\odot$ is involutory, we have $\mathcal{B} \subseteq \mathcal{A}^{\odot}$ and therefore $\mathcal{A}^{\odot} = \mathcal{B}$ (which is equivalent to $\mathcal{A}= \mathcal{B}^{\odot} $). Consequently,  $(k_1,\ldots,k_p) = (l_q,\ldots,l_1)$.\smallskip
			
			(c) By Corollary \ref{cor:Jordan monomorphism unified} a Jordan isomorphism $\phi:\mathcal{A} \to \mathcal{B}$ is multiplicative or antimultiplicative, so the arguments in (a) and (b) can be applied.
			
		\end{proof}
		
		\section{Proof of the main result}\label{sec:Proof of the main result}
		
		We now proceed with the proof of our main result, namely Theorem \ref{thm:main result}.  We begin with the following auxiliary fact:
		
		\begin{lemma}\label{lem:char}
			Let $\mathcal{A}$ be a block upper-triangular subalgebra of $M_n$ and  $\phi:\mathcal{A}\to M_n$ a continuous map satisfying  $\sigma(A)\subseteq \sigma(\phi(A))$ for every matrix $A$ in some open (relative to $\mathcal{A}$) set $\,\mathcal{U} \subseteq \mathcal{A}$. Then $p_A=p_{\phi(A)}$ for all $A\in \mathcal{U}$, thus $\phi$ is spectrum preserving on $\,\mathcal{U}$ and in turn, the algebraic multiplicities of all eigenvalues are also preserved by $\phi$.  	
		\end{lemma}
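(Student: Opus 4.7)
The plan is to exploit matrices with $n$ distinct eigenvalues, for which the spectral inclusion already forces equality of characteristic polynomials, and then extend to all of $\mathcal{U}$ by continuity. Concretely, if $A\in\mathcal{U}$ has $n$ distinct eigenvalues then $|\sigma(A)|=n$, and since $\phi(A)\in M_n$ has at most $n$ spectral values, the hypothesis $\sigma(A)\subseteq \sigma(\phi(A))$ forces $\sigma(A)=\sigma(\phi(A))$ with every eigenvalue of $\phi(A)$ simple. As both $p_A$ and $p_{\phi(A)}$ are monic of degree $n$ with the same $n$ simple roots, we obtain $p_A=p_{\phi(A)}$ on this subset.

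The next step is to show that
$$\mathcal{U}':=\{A\in\mathcal{U} : A \text{ has } n \text{ distinct eigenvalues}\}$$
is dense in $\mathcal{U}$. The coefficients of $p_A$ are polynomials in the entries of $A$, hence so is the discriminant $\operatorname{disc}(p_A)$. Its restriction to the affine subspace $\mathcal{A}\subseteq M_n$ is not identically zero because $\operatorname{disc}(p_\Delta)\neq 0$ and $\Delta\in\mathcal{A}$. Consequently its zero set is a proper algebraic subvariety of $\mathcal{A}$, hence nowhere dense; intersecting with the open set $\mathcal{U}$ shows that $\mathcal{U}'$ is itself open and dense in $\mathcal{U}$.

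Finally, the map $A\mapsto p_A-p_{\phi(A)}$ is continuous from $\mathcal{U}$ into the space of polynomials of degree at most $n$ (identified with $\C^{n+1}$): both $A\mapsto p_A$ and $A\mapsto p_{\phi(A)}$ are continuous, the latter because $\phi$ is continuous by assumption and $B\mapsto p_B$ is polynomial in the entries of $B$. Since this map vanishes on the dense subset $\mathcal{U}'$, it vanishes identically on $\mathcal{U}$, yielding $p_A=p_{\phi(A)}$ for every $A\in\mathcal{U}$. Preservation of the spectrum as a set and of algebraic multiplicities then follow at once, since both are read off the characteristic polynomial. I do not foresee a substantive obstacle; the only mild care needed is in the density step, where one uses that a polynomial not identically zero on an affine space has nowhere dense zero locus.
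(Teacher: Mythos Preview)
Your argument is correct and follows the same overall strategy as the paper: establish $p_A=p_{\phi(A)}$ on the set of matrices in $\mathcal{U}$ with $n$ distinct eigenvalues, then extend to all of $\mathcal{U}$ by density and continuity of $A\mapsto p_A$ and $A\mapsto p_{\phi(A)}$.

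The only substantive difference is in how density is obtained. The paper constructs, for each $A\in\mathcal{U}$, an explicit one-parameter family $A_v\to A$ inside $\mathcal{A}$ with distinct eigenvalues: it applies Schur triangularization to each diagonal block so that $A=S(\Lambda+N)S^{-1}$ with $S\in\mathcal{A}^\times$, $\Lambda$ diagonal and $N$ strictly upper-triangular, and then perturbs the diagonal part. Your route is instead global and nonconstructive: the discriminant of $p_A$ is a polynomial on $\mathcal{A}$ that does not vanish at $\Delta$, so its zero locus is a proper subvariety, hence nowhere dense, and its complement intersected with the open set $\mathcal{U}$ is dense in $\mathcal{U}$. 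Both arguments are short; yours avoids any reference to the block structure or Schur form, while the paper's has the minor advantage of being entirely self-contained and giving an explicit approximating sequence.
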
	
		
		\begin{proof} The assertion is clearly true on every  set $\mathcal{E}\subseteq \mathcal{U}$ of all matrices  with $n$ distinct eigenvalues. 
			By applying the Schur triangularization on each diagonal block, every matrix $A\in\mathcal{U}$ can be represented as $A=S(\Lambda+N)S^{-1}$ for some diagonal matrix $\Lambda$, an invertible (block-diagonal) matrix $S\in\mathcal{A}$ and a strictly upper-triangular matrix $N$. Define a family of matrices   $A_v=S(\Lambda+E(v)+N)S^{-1}$  where $E(v)=\mathrm{diag}(v_1,\dots,v_n)$, and the diagonal entries of $\Lambda+E(v) =\mathrm{diag}(\lambda_1+v_1,\dots,\lambda_n +v_n)$ are all distinct whenever $\norm{v}=\norm{ (v_1,\dots,v_n)}$ is sufficiently small.
			Clearly, $A=\lim_{v\to 0} A_v$ and by the continuity of $\phi$ and of the determinant
			\begin{align*}
				p_A(x)&=\mathrm{det}(A-xI) =\lim_{v\to 0}\mathrm{det}(A_v-xI) \\
				&=\lim_{v\to 0}\mathrm{det}(\phi(A_v)-xI)=\mathrm{det}(\phi(A)-xI)=p_{\phi(A)}(x)
			\end{align*}
			confirming the assertion of the lemma.
		\end{proof}

		The following lemma might be well-known, but we include it for completeness.
		\begin{lemma}\label{lem:idR}
			For every  idempotent rank-one matrix $R\in \mathcal{T}_n$ there exists a matrix $T\in \mathcal{T}_n^\times$ such that $R=TE_{ii}T^{-1}$ for some $i\in \{1,2,\dots,n\}$.
		\end{lemma}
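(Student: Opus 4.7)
The plan is a direct construction. First, observe that since $R$ is an upper-triangular idempotent of rank one, its spectrum $\{0,1\}$ with algebraic multiplicities $n-1$ and $1$ appears along the diagonal; hence there is a unique index $i\in\{1,\ldots,n\}$ with $R_{ii}=1$ and $R_{jj}=0$ for $j\neq i$. Factor $R=uv^t$ with $u,v\in\C^n$ satisfying the idempotency relation $v^tu=1$. From $R_{jk}=u_jv_k$, the upper-triangular condition combined with $u_iv_i=R_{ii}=1$ forces $u_j=0$ for $j>i$ and $v_k=0$ for $k<i$; equivalently, $u$ is supported in $\{1,\ldots,i\}$ and $v$ in $\{i,\ldots,n\}$, with $u_i,v_i\neq 0$.

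The idea is then to exhibit an invertible upper-triangular $T$ such that $Te_i=u$ and $e_i^tT^{-1}=v^t$; once this is done, $TE_{ii}T^{-1}=(Te_i)(e_i^tT^{-1})=uv^t=R$. I build $T$ in the factored form $T=T_2T_1$ designed to separately adjust the $i$-th column of $T$ and the $i$-th row of $T^{-1}$:
\[
T_1 := I + (u-e_i)e_i^t, \qquad T_2 := I + E_{ii} - u_i\,e_i v^t.
\]
The support conditions on $u$ and $v$ guarantee upper-triangularity: the perturbation in $T_1$ is supported in column $i$ and rows $\leq i$, while that in $T_2$ is supported in row $i$ and columns $\geq i$. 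A short computation gives $\bigl((u-e_i)e_i^t\bigr)^2=(u_i-1)(u-e_i)e_i^t$, whence $T_1^{-1}=I-\tfrac{1}{u_i}(u-e_i)e_i^t$; and the relation $u_iv_i=1$ yields $(u_i e_i v^t - E_{ii})^2=0$, whence $T_2^{-1}=I+u_ie_iv^t-E_{ii}$.

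The verification is then routine. Using $v^tu=1$ one checks $T_2u=u$, so $Te_i=T_2T_1e_i=T_2u=u$; and $e_i^tT_1^{-1}=\tfrac{1}{u_i}e_i^t$ combined with $e_i^tT_2^{-1}=u_iv^t$ yields $e_i^tT^{-1}=v^t$. Hence $TE_{ii}T^{-1}=uv^t=R$, as required. The argument is essentially a coordinate computation and no serious conceptual obstacle arises; the only care needed is the support bookkeeping that ensures both $T_1$ and $T_2$ lie in $\mathcal{T}_n^\times$.
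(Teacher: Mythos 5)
Your construction is correct: the diagonal of an upper-triangular idempotent of rank one is indeed a permutation of $(1,0,\dots,0)$, the support constraints $u_j=0$ for $j>i$, $v_k=0$ for $k<i$ follow from $R_{ji}=u_jv_i=0$ and $R_{ik}=u_iv_k=0$ together with $u_iv_i=1$, both factors $T_1=I+(u-e_i)e_i^t$ and $T_2=I+E_{ii}-u_ie_iv^t$ are invertible upper-triangular (the $(i,i)$ entries are $u_i\neq 0$ and $1+1-u_iv_i=1$), the stated inverses check out, and $Te_i=u$, $e_i^tT^{-1}=v^t$ give $TE_{ii}T^{-1}=uv^t=R$. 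This is, however, a genuinely different route from the paper's: the paper argues by induction on $n$, stripping off a vanishing last row of $R$, applying the induction hypothesis to the top-left $(n-1)\times(n-1)$ block to reduce to $E_{ii}+\alpha E_{in}$, and removing the residual entry by conjugation with $S=I+\alpha E_{in}$, with the case of a vanishing first row handled by passing to $R^{\symm}=JR^tJ$. Your argument trades the induction for an explicit one-shot formula built from the rank-one factorization $R=uv^t$: it is self-contained, produces $T$ explicitly, and avoids the (mild) case split on which outer row of $R$ vanishes; the paper's version is shorter to state and matches the inductive style used throughout the rest of the paper. Either proof is acceptable.
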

		\begin{proof}
			
			We prove the claim by induction on $n$. There is nothing to do when $n=1$. 
			Suppose further that the claim holds for $n-1\ge 1$ and that the last row of $R\in \mathcal{T}_{n}$ vanishes. Representing
			\begin{equation*}
				R=\begin{bmatrix}
					R_{1} & R_2 \\ 0 & 0
				\end{bmatrix}
			\end{equation*}
			where $R_1\in \mathcal{T}_{n-1}$ and applying the induction hypothesis gives that $R_1=T_1 E_{ii} T_1^{-1}$ for some $T_1\in \mathcal{T}_{n-1}^\times$ and $1\le i \le n-1$. It follows that 
			\begin{equation*}
				\diag(T_1,1)^{-1}R\diag(T_1,1)=	E_{ii} + \alpha E_{in}
			\end{equation*}
			for some $1\le i <n$ and a complex number $\alpha$. By setting $S=I+\alpha E_{in}\in \mathcal{T}_n^\times$ we easily get that $E_{ii} + \alpha E_{in}= SE_{ii}S^{-1}$. \smallskip
			
			If the first row of $R$ is equal to zero, then we apply the above argument on $R^\odot$.
		\end{proof}
		
		\smallskip
		
		We will need the following lemma concerning particular upper-triangular families of pairwise orthogonal rank-one idempotent matrices.
		
		\begin{lemma}\label{lem:techR}
			Let  $S(y):=I+e_1y^t\in \mathcal{T}_n$, where $y=\sum_{i=2}^{n}y_ie_i \in \C^n$. Then $S(y)^{-1}=S(-y)$ and
			\begin{equation} \label{al:SES}
				S(y)^{-1}E_{ii}S(y) =   
				\begin{cases}
					E_{11} +e_1y^t,   &\quad \text{if } i=1,	\\ 
					E_{ii}-y_iE_{1i}, &\quad \text{if }  1<i\le n.	\\ 
				\end{cases}
			\end{equation}
		\end{lemma}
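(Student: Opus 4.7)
The statement is a direct rank-one computation, so the plan is simply to expand both sides carefully. First I would verify $S(y)^{-1} = S(-y)$: since $y = \sum_{i=2}^n y_i e_i$ has no $e_1$-component, we have $y^t e_1 = 0$, and hence
\begin{equation*}
S(y)S(-y) = (I+e_1 y^t)(I-e_1 y^t) = I - e_1(y^t e_1)y^t = I,
\end{equation*}
so $S(-y)$ is indeed the inverse.

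Next I would expand the conjugation
\begin{equation*}
S(y)^{-1} E_{ii} S(y) = (I - e_1 y^t) E_{ii} (I + e_1 y^t) = E_{ii} + E_{ii} e_1 y^t - e_1 y^t E_{ii} - e_1 (y^t E_{ii} e_1) y^t,
\end{equation*}
and simplify using the two elementary identities $E_{ii}e_1 = \delta_{i1} e_1$ and $y^t E_{ii} = y_i e_i^t$. In the case $i=1$, this gives $y^t E_{11} = y_1 e_1^t = 0$ (since $y_1 = 0$) and $E_{11}e_1 = e_1$, so all cross-terms vanish except the second, yielding $E_{11} + e_1 y^t$. In the case $i>1$, we have $E_{ii}e_1 = 0$ and $y^t E_{ii} e_1 = y_i e_i^t e_1 = 0$, so only the third term survives, giving $E_{ii} - y_i e_1 e_i^t = E_{ii} - y_i E_{1i}$.

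There is essentially no obstacle here; the only thing to be careful about is keeping track of which terms vanish because $y$ is supported away from the first coordinate. The entire proof should fit in a handful of lines of algebra.
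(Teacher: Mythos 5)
Your proof is correct and is essentially the same direct computation as the paper's: the paper organizes it by writing $E_{ii}=e_ie_i^t$ and computing $(S(-y)e_i)(e_i^tS(y))$, while you expand the triple product $(I-e_1y^t)E_{ii}(I+e_1y^t)$ directly, but both hinge on the same vanishing facts $y^te_1=0$, $E_{ii}e_1=\delta_{i1}e_1$, and $y^tE_{ii}=y_ie_i^t$. No gaps.
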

		\begin{proof}
			It is easy to check that 	$S(y)^{-1}=S(-y)$. The straightforward calculation 
			\begin{align*}
				S(y)^{-1}E_{11}S(y)&=(S(-y)e_1)(e_1^t S(y)) =(e_1 - (y^te_1)e_1)(e_1+y)^t \\
				S(y)^{-1}E_{ii}S(y)&=(S(-y)e_i)(e_i^t S(y)) =(e_i - (y^te_i)e_1)e_i^t,\ \ i\neq 1,  		
			\end{align*}
			provides \eqref{al:SES}. 
		\end{proof}

		\begin{proof}[Proof of Theorem \ref{thm:main result}] Throughout the proof, let $\mathcal{A}:=\mathcal{A}_{k_1,k_2,\dots,k_r}$, $r\geq 2$, be fixed.
			Let $\phi : \mathcal{A}\to M_n$ be a continuous injective map which preserves commutativity and spectrum. By Lemma \ref{lem:char}, we first conclude that $\phi$ also preserves the algebraic multiplicities of the eigenvalues.
			
			\smallskip
			
			For an easier comprehension, the proof will now be divided into several steps.

			\smallskip
			\begin{step}\label{st:0001} \textit{There exists an invertible matrix $S\in \mathcal{A}$ such that
					$\phi(D) = SDS^{-1}$,  for all diagonal matrices $D \in \mathcal{A}$.  }
			\end{step}
			\begin{proof}
				Since the matrix $\phi(\Delta) \in M_n$ is diagonalizable with eigenvalues $1,\ldots, n$, there exists an $S \in M_n^{\times}$ such that $\phi(\Delta) = S\Delta S^{-1}$. For any $D \in \mathcal{D}_n$ we have  $\phi(D) \leftrightarrow \phi(\Delta) = S\Delta S^{-1}$, since $D \leftrightarrow \Delta$. So, $\phi(D)=S\widetilde{D} S^{-1}$ for some diagonal $\widetilde{D}\in M_n$. By the continuity of $\phi$, the argument from \cite[Lemma 2.1]{Semrl} gives that $\widetilde{D}=D$. For completeness, we include it here. Assume first that the diagonal entries of $D$ are all distinct, and denote them by $\lambda_1, \ldots, \lambda_n$. Choose continuous paths $f_k : [0,1] \to \C, 1 \le k \le n$  from $k$ to $\lambda_k$ such that for all $t \in [0,1]$ the values $f_1(t), \ldots,f_n(t)$ are all distinct. To be explicit, for each $1 \le k \le n$ and any path $$\alpha_k : [0,1]\to (\C\setminus\{1,\ldots,n, \lambda_1,\ldots,\lambda_n\}) \cup \{k,\lambda_k\}$$ from $k$ to $\lambda_k$ (which exists by path-connectedness), we can define
				$$f_k(t) := \begin{cases} k, \quad &\text{ if }t \in \left[0,\frac{k-1}n\right], \\
					\alpha_k\left(n\left(t-\frac{k-1}n\right)\right), \quad &\text{ if }t \in \left[\frac{k-1}n,\frac{k}n\right], \\
					\lambda_k, \quad &\text{ if } t \in \left[\frac{k}n, 1\right]. \\
				\end{cases}$$
				
				Denote $$d:=  \min_{t \in [0,1]}\Big\{\abs{f_i(t)-f_j(t)} : 1 \le i,j \le n\Big\} > 0.$$
				
				Notice that the set
				\begin{align*}
					\mathcal{S} &= \{t \in [0,1] : \phi(\diag(f_1(t),\ldots,f_n(t))) \ne S\diag(f_1(t),\ldots,f_n(t))S^{-1}\} \\
					&= \{t \in [0,1] : \norm{S^{-1}\phi(\diag(f_1(t),\ldots,f_n(t)))S - \diag(f_1(t),\ldots,f_n(t))}_\infty \ge d\}
				\end{align*}
				is both open and closed in $[0,1]$. Since $0 \notin \mathcal{S}$, by the connectedness of $[0,1]$ it follows that $\mathcal{S} = \emptyset$. In particular, for $t = 1$ we get
				\begin{align*}
					\phi(\diag(\lambda_1,\ldots,\lambda_{n}))&=\phi(\diag(f_1(1), \ldots, f_n(1))) = S\diag(f_1(1), \ldots, f_n(1))S^{-1} \\
					&=S\diag(\lambda_1,\ldots,\lambda_n)S^{-1}.
				\end{align*}
				As the diagonal matrices with distinct eigenvalues are dense in $\mathcal{D}_n$ and $\phi$ is continuous, the claim follows for all $D \in \mathcal{D}_n$.
				
			\end{proof}
			
			\medskip
			In view of Step \ref{st:0001}, by passing to the map $S^{-1}\phi(\cdot) S$ we can assume further that $\phi$ is the identity on all diagonal matrices.
			
			
			
			
			\begin{step}\label{st:hom0}
				\phantom{a}
				\begin{enumerate}[(a)]
					\item \textit{$\phi$ is a homogeneous map,}
					\item \textit{If $A$ and $B$ are diagonalizable matrices in $\mathcal{A}$, with the similarity matrix in $\mathcal{A}^\times$, satisfying $AB=BA=0$, then $\phi(A)\phi(B)=\phi(B)\phi(A)=0$.}
				\end{enumerate}
			\end{step}
			
			\begin{proof}
				\begin{enumerate}[(a)]
					\item For any $S\in \mathcal{A}^\times$ there exists a matrix $T_S\in M_n^\times$ such that $\phi(S\Delta S^{-1})=T_S\Delta T_S^{-1}$. Applying Step 1 for the map $X\mapsto T_S^{-1}\phi(SX S^{-1})T_S$ provides that $\phi(SDS^{-1})=T_SDT_S^{-1}$ for every diagonal matrix $D$. Replacing $D$ by $\alpha D$, $\alpha \in \mathbb{C}$ gives that
					$\phi$ is homogeneous on the set of all diagonalizable matrices in $\mathcal{A}$. By the density of such matrices (a consequence of Lemma \ref{lem:diagonalizes within A}) we conclude that $\phi$ is a homogeneous map on $\mathcal{A}$.
					\item Assume that $A=S_1D_1S_1^{-1}$ for some $S_1\in \mathcal{A}^\times$ and $D_1=\diag(d_1,d_2,\dots,d_n)$. If $A=0$ or $A$ is invertible, we are done. So, we can assume that the index set $\mathcal{J}=\{1 \le j \le n : \ d_j\neq  0\}$ is a nonempty proper subset of $\{1,2,\dots,n\}$. We observe that $S_1^{-1}BS_1\in \left( \cap_{j\in \mathcal{J}} E_{jj}^\perp \right) \cap \mathcal{A}$ and, since it is diagonalizable, there exists a $S_2\in \mathcal{A}^\times$ commuting with all $E_{jj}$, $j\in \mathcal{J}$, such that $S_2^{-1}S_1^{-1}BS_1S_2=:D_2$ is diagonal and $S_2^{-1}S_1^{-1}AS_1S_2=D_1$. Now, for $S:=S_1S_2\in\mathcal{A}^\times$ we have $A=SD_1S^{-1}$ and $B=SD_2S^{-1}$. To complete the proof of (b), there exists an invertible matrix $T \in M_n$ such that $\phi(S\Delta S^{-1})=T\Delta T^{-1}$. It follows that $\phi(SD_jS^{-1})=TD_j T^{-1}$, $j=1,2$. Hence $\phi(A)\phi(B)=\phi(B)\phi(A)=0$.
				\end{enumerate}
			\end{proof}
			
			\begin{step}\label{st:rk1}
				\textit{$\phi$ maps every  rank-one  matrix in $\mathcal{A}$ to a rank-one matrix in $M_n$ with the same spectrum.}
			\end{step}
			\begin{proof}		
				Let first $R\in \mathcal{A}$ be a rank-one idempotent matrix. In particular, only one diagonal block of $R$ is nonzero.  By Schur triangularization theorem the nonzero diagonal block of $R$ is unitarily similar to an upper-triangular matrix. It follows that for some block-diagonal unitary matrix $U\in \mathcal{A}$, the matrix $U^\ast RU$ is upper-triangular. Now, applying Lemma \ref{lem:idR} for some $S\in \mathcal{T}_n^\times$ and $i\in \{1,2,\dots,n\}$ we have  $R=USE_{ii}S^{-1}U^\ast$. As $\phi(US \Delta S^{-1}U^\ast)=T\Delta T^{-1}$ for some $T\in \mathcal{A}^\times$, by Step  \ref{st:0001} $$\phi(R)=\phi(USE_{ii}S^{-1}U^\ast)=TE_{ii} T^{-1}$$ is a rank-one idempotent.
				
				\smallskip
				
				By Step \ref{st:hom0} the map $\phi$ is homogeneous, so  $\phi(\alpha R)$ is of rank one for every non zero $\alpha\in \C$. Finally, every rank-one nilpotent matrix can be obtained as limit of a sequence of non-nilpotent rank-one matrices. As the rank is lower-semicontinuous, it can only decrease and, by the injectivity of $\phi$, it cannot reach zero.
			\end{proof}
			
			\begin{step} \label{st:0003}
				\textit{Fix $k\in \{1,2,\dots,n\}$. Let  $S\in \mathcal{A}$ and $T\in M_n$ be  invertible matrices such that  $\phi(SE_{ii}S^{-1})=TE_{ii}T^{-1}$ for all $i\neq k$. Then $\phi(SE_{kk}S^{-1})=TE_{kk}T^{-1}$. }
			\end{step}	
			\begin{proof}
				We know that $\phi(S\Delta S^{-1})=P\Delta P^{-1}$ for some invertible $P\in M_n$ which implies that $\phi(SDS^{-1})=PDP^{-1}$ for every diagonal matrix $D$. It follows that $TE_{ii}T^{-1}=PE_{ii}P^{-1}$ whenever $i\neq k$ and so $$\phi(SE_{kk}S^{-1})=PE_{kk}P^{-1} =I- \sum_{i\neq k} PE_{ii}P^{-1}= I- \sum_{i\neq k}TE_{ii}T^{-1}=TE_{kk}T^{-1}.$$
			\end{proof}
			%

			\begin{step}[\textbf{Base of induction, $n = 3$}]
				Now we prove Theorem \ref{thm:main result} completely for $n=3$.
			\end{step}
			It suffices to prove the theorem for $\mathcal{A} =  \mathcal{A}_{1,1,1}$ or $\mathcal{A}=\mathcal{A}_{1,2}$. Indeed, the $M_3$ case is covered by Theorem \ref{thm:Semrl}, while if $\phi : \mathcal{A}_{2,1} \to M_3$ is a continuous injective commutativity and spectrum preserving map, then the map $$X \mapsto \phi(X^{\symm}) : \mathcal{A}_{1,2} \to M_3$$
			inherits similar
			properties from $\phi$. Thus the theorem follows from the $\mathcal{A}_{1,2}$ case.
			
			\begin{stepa}[\textbf{Matrix units}]\label{step:behaviour on matrix units 3x3}
				We show that we can, without loss of generality, assume that there exist constants $c_{ij} \in \C^{\times}$ such that $$\phi(E_{ij}) = c_{ij} E_{ij}, \qquad \text{ for all matrix units }E_{ij} \in \mathcal{A}.$$
			\end{stepa}
			We have $E_{12} \leftrightarrow E_{33}$ so by our assumption that $\phi$ fixes all diagonal matrices, we have $\phi(E_{12}) \leftrightarrow \phi(E_{33}) = E_{33}$. By the fact that $\phi$ is injective and preserves spectrum, the matrix $\phi(E_{12})$ is a nonzero nilpotent so we conclude
			$$\phi(E_{12}) = \begin{bmatrix} ac & a^2 & 0 \\ -c^2 & -ac & 0 \\ 0 & 0 & 0\end{bmatrix}$$
			for some $a,c \in \C$ not both equal to $0$. Analogously we get
			$$\phi(E_{13}) = \begin{bmatrix} bd & 0 & b^2 \\ 0 & 0 & 0 \\ -d^2 & 0 & -bd\end{bmatrix}$$
			for some $b,d \in \C$ not both equal to $0$. Since $E_{12} \leftrightarrow E_{13}$ we have
			$$\begin{bmatrix}
				a b c d & 0 & a b^2 c \\
				-b c^2 d & 0 & -b^2 c^2 \\
				0 & 0 & 0 \\
			\end{bmatrix} = \phi(E_{12})\phi(E_{13}) = \phi(E_{13})\phi(E_{12}) = \begin{bmatrix}
				a b c d & a^2 b d & 0 \\
				0 & 0 & 0 \\
				-a c d^2 & -a^2 d^2 & 0 \\
			\end{bmatrix}.$$
			If $a \ne 0$, then $-a^2d^2 = 0$ implies $d =0$ so $b \ne 0$. Now $ab^2c = 0$ implies $c = 0$ and therefore
			$$\phi(E_{12}) = a^2E_{12}, \quad \phi(E_{13}) = b^2 E_{13}.$$
			If $c \ne 0$, then $-b^2c^2 = 0$ implies $b = 0$ so $d \ne 0$. Now $a^2bd = 0$ implies $a = 0$ and therefore
			$$\phi(E_{12}) = -c^2E_{21}, \quad \phi(E_{13}) = -d^2 E_{31}.$$
			
			Without loss of generality, we can suppose that we are in the first case, as otherwise we can pass to the map $\phi(\cdot)^t$.
			
			\smallskip
			
			Following analogous reasoning, we also obtain
			$$\phi(E_{23}) = \begin{bmatrix} 0 & 0 & 0 \\ 0 & ef & e^2 \\ 0 & -f^2 & -ef\end{bmatrix}$$
			for some $e,f \in \C$ not both equal to $0$.
			By using $E_{23} \leftrightarrow E_{13}$ it follows that $f = 0$ and therefore $\phi(E_{23}) = e^2 E_{23}$.\\
			
			This settles the matter for $\mathcal{A} = \mathcal{T}_3$. In the case of $\mathcal{A} = \mathcal{A}_{1,2}$, similarly as above using $E_{32} \leftrightarrow E_{11}, E_{12}$ we obtain $\phi(E_{32}) = c_{32}E_{32}$ for some $c_{32} \in \C^{\times}$.
			
			\smallskip
			
			As $\phi(E_{1j})=c_{1j}E_{1j}$, notice that
			$$\diag(1, c_{12},c_{13})\phi(E_{1j})\diag(1, c_{12},c_{13})^{-1}= E_{1j}, \quad 1 \le j \le 3.$$
			Therefore, the map 
			\begin{equation}\label{eq:newphi}
				\mathcal{A}\to M_3, \qquad \diag(1, c_{12},c_{13})\phi(\cdot)\diag(1, c_{12}, c_{13})^{-1}
			\end{equation}
			inherits all  the relevant properties of $\phi$ (continuity, injectivity, spectrum and commutativity preserving, as well acting as the identity on $\mathcal{D}_3$) and satisfies $E_{ij} \mapsto c'_{ij}E_{ij}$ for all $E_{ij}\in \mathcal{A}$, where $c_{ij}'=\frac{c_{1i}c_{ij}}{c_{1j}}$. Obviously, $c'_{ij}=1$ if $i=j$ or $i=1$. By replacing $\phi$ by the map defined by \eqref{eq:newphi}, without loss of generality we can further assume that $c_{1j}=1$, i.e. $\phi(E_{1j})=E_{1j}$ for all $1 \leq j \leq 3$.
			
			\begin{stepa}[\textbf{Rank-one matrices}]\label{step:identity on rank-one matrices 3x3}
				We show that $\phi(A) = A$ for all non-nilpotent rank-one matrices $A \in \mathcal{A}$.
			\end{stepa}

			When we refer to orthogonality in $\mathbb{C}^n$ we mean orthogonality with respect to the standard inner product, i.e.\ $x\perp y$ if and only if $y^\ast x=0\  (=x^\ast y)$.
			\smallskip

			Recalling the assumption $\mathcal{A}=\mathcal{A}_{1,2}$ or $\mathcal{A}=\mathcal{T}_3$, every rank-one matrix in $\mathcal{A}$ is either of the form 
			
			\begin{equation*}
				(a)\, \begin{bmatrix}
					\ast  & \ast & \ast\\
					0  & 0 & 0 \\
					0 & 0 & 0 
				\end{bmatrix} \qquad  \text{or} \qquad (b) \, \begin{bmatrix}
					0  & \ast & \ast\\
					0  & \ast & \ast \\
					0 & \ast & \ast 
				\end{bmatrix}.
			\end{equation*}
			\smallskip 
			We start with case (a). 
			Let $y=(0,y_2,y_3)$ and
			\begin{equation*}
				R_1(\alpha,y)=\begin{bmatrix}
					\alpha & y_2 & y_3 \\
					0 & 0& 0\\
					0 & 0& 0
				\end{bmatrix}.
			\end{equation*}
			In view of Lemma \ref{lem:techR}, if $\alpha=1$, we write $R_1(1,y)=S(y)^{-1}E_{11}S(y)$. Let
			\begin{align*}
				R_2&:=S(y)^{-1}E_{22}S(y) =E_{22}-y_2E_{12} \\
				R_3&:=S(y)^{-1}E_{33}S(y) =E_{33}-y_3E_{13}.  
			\end{align*}
			Moreover, $R_2\perp E_{33}$ and commutes with $E_{13}$, while $R_3\perp E_{22}$ and commutes with $E_{12}$. Correspondingly, the same holds for $\phi(R_2)$ and $\phi(R_3)$. Hence there exist continuous functions $f_j:\mathbb{C}\to \mathbb{C}$,   such that $\phi(R_j)=E_{jj}-f_j(y_j)E_{1j}$, $j=2,3$. Since $\phi(E_{jj})=E_{jj}$ we have $f_j(0)=0$, $j=2,3$. Let $f(y):=(0,f_2(y_2),f_3(y_3))$. By Lemma \ref{lem:techR}, for  $j=2,3$ we have
			\begin{align*}
				\phi(R_j)= S(f(y))^{-1}E_{jj}S(f(y)).
			\end{align*} 
			By Step \ref{st:0003} we have that 
			\begin{equation*}
				\phi(R_1(1,y))=S(f(y))^{-1}E_{11}S(f(y))=R_1(1,f(y)).
			\end{equation*}
			From
			\begin{equation*}
				\phi(\alpha E_{11} + y_j E_{1j})=\alpha \phi (E_{11} + y_j/\alpha E_{1j}) =\alpha E_{11} + \alpha f_j(y_j/\alpha) E_{1j},
			\end{equation*}
			the homogeneity of $\phi$, the continuity of $f_j$, and
			\begin{equation*}
				y_jE_{1j}=\lim_{\alpha\to 0} \phi(\alpha E_{11} + y_j E_{1j})
			\end{equation*}
			we get
			\begin{equation*}
				\lim_{\alpha\to 0}\alpha f_j(y_j/\alpha)=y_j,\ \qquad j=2,3.
			\end{equation*}
			Similarly, 
			\begin{align*}
				\phi(R_1(0,y)) &=\phi\left(\lim_{\alpha\to 0} R_1(\alpha,y)\right)=\alpha\phi\left(\lim_{\alpha\to 0}  R_1\left(1,y/\alpha\right)\right) =R_1\left(0, \lim_{\alpha\to 0}\alpha f\left(y/\alpha\right)\right) \\
				&=R_1(0,y).
			\end{align*}

			For arbitrary nonzero $t \in \C$ let us next consider the matrix $E_{33}-tE_{23}$, being orthogonal to $E_{11}$.  By setting $z=(0,1,t)$, $E_{33}-tE_{23}$ commutes with $R_1(0,z)=E_{12}+tE_{13}=\phi(R_1(0,z))$. So, there exist complex numbers $p_t$ and $q_t$, not both equal to zero at any nonzero $t$, such that 
			$$\phi(E_{33}-tE_{23})=\begin{bmatrix}
				0  & 0 & 0\\
				0  & -tp_z & -tq_t \\
				0 & p_t & q_t
			\end{bmatrix}.$$
			We use this observation with the relation 
			$R(1,y_2,y_3) \perp E_{33}-y_3/y_2 E_{23}$, $y_2\neq 0$, (we have chosen $t=y_3/y_2$) and thus obtain that $\left[ 1\ f_2(y_2)\  f_3(y_3) \right] \left[ 0\ -y_3/ y_2\   1\right]^t=0$ when $y_2, y_3$ are both nonzero, giving  $f_2(y_2)/y_2=f_{3}(y_3)/y_3=c$ for some nonzero complex scalar $c$. Recall that by our reduction $\phi(E_{12})=E_{12}$. By the continuity and the homogeneity of $\phi$ we have
			$$E_{12}=\phi(E_{12})=\lim_{\alpha\to 0}\phi(\alpha E_{11}+E_{12})=\lim_{\alpha\to 0}\alpha R(1,c/\alpha,0)=\lim_{\alpha\to 0} R(\alpha,c,0)=cE_{12}.$$ 
			Hence $c=1$. The cases when $y_2 = 0$ or $y_3 = 0$ follow by density, which finishes the proof of case (a).
			
			\smallskip 
			
			Let us proceed with the case (b).  We can write any such rank-one (possibly nilpotent) matrix as $ab^\ast$ where $a$, $b\in\mathbb{C}^3$ and $b \perp  e_1$. Suppose $a\notin \mathrm{span}\{e_1\}$ (otherwise, we are in the case (a)). By Step \ref{st:rk1}, $\phi(ab^\ast)=uv^\ast\in M_3$ for some nonzero vectors $u$, $v$ such that $b^\ast a=v^\ast u$. 
			Obviously, $ab^\ast$ commutes with $ e_1(a^\perp)^\ast$ for any vector $a^\perp$ orthogonal to $a$. From (a) we already know that $\phi( e_1(a^\perp)^\ast)= e_1(a^\perp)^\ast$. So, 
			\begin{equation}\label{eq:rank-one relation 3x3}
				(v^*e_1)u(a^\perp)^\ast = uv^\ast e_1(a^\perp)^\ast  = e_1(a^\perp)^\ast uv^\ast=((a^\perp)^\ast u)e_1v^\ast.
			\end{equation}
			We claim that $v \perp e_1$ and $u \perp a^\perp$. Suppose the contrary. Then, since both $u(a^\perp)^\ast$ and $e_1v^\ast$ are nonzero, \eqref{eq:rank-one relation 3x3} shows that both $v^*e_1$ and $(a^\perp)^\ast u$ are zero or both nonzero. If they are nonzero, we have  $u\in \mathrm{span}\{e_1\}$, which conflicts injectivity. Therefore, $v \perp e_1$ and $u \perp a^\perp$ 
			and so, $u\in ( \{a\}^\perp )^\perp=\mathbb{C}a$. Without loss of any generality, we can assume $u=a$, and the scalar factor can be absorbed in $v$. So far we have obtained that $\phi(ab^\ast)=av^\ast\in \mathcal{A}$ for some vector $v\perp e_1$ which depends on $a$ and $b$. On the other hand, $ab^\ast$ commutes with $(b^\perp)c^\ast$,  where $b^\perp \perp b$ and $c\in \{e_1,a\}^\perp$ (it is possible that $c$ can be chosen only in the way that $(b^\perp)c^\ast$ is nilpotent and so, we can use only commutativity preserving). From the previous argument, we know that $\phi(b^\perp c^\ast)=b^\perp w^\ast$ for some $w\ne 0$. Then $\phi(ab^\ast)=av^\ast \leftrightarrow b^\perp w^\ast=\phi(b^\perp c^\ast)$. This gives 
			$$(v^\ast b^\perp)aw^\ast = av^* b^\perp w^* = b^\perp w^* av^* =(w^\ast a)b^\perp v^\ast.$$ 
			At this point assume that $ab^\ast$ is not nilpotent, i.e.\ $b$ is not orthogonal to $a$. Then $b^\perp$ (defined up to a scalar factor) and $a$ are linearly independent, hence $v^\ast b^\perp=0$. This implies that $v=\beta b$ for some scalar $\beta\neq 0$. From the spectrum preserving property, comparing the traces, we get that $\beta =1$, and we are done.

			\bigskip

													\begin{stepa}[\textbf{Conclusion of the base step}]\label{step:finish 3x3}
														$\phi(A) = A$ for all matrices $A \in \mathcal{A}$.
													\end{stepa}
													By density, it suffices to prove that $\phi$ is the identity map on the set of matrices in $\mathcal{A}$ with $3$ distinct eigenvalues. By Lemma \ref{lem:diagonalizes within A} for every matrix $A$ of this kind there exists a matrix  $S \in \mathcal{A}^{\times}$ and a diagonal matrix $D$ such that $A=SDS^{-1}$. Applying Step 1 for the map $X\mapsto \phi(SXS^{-1})$ we get that there exists $T \in M_3^{\times}$ such that
													$$\phi(SDS^{-1}) = TDT^{-1}, \quad \text{ for all }D\in\mathcal{D}_3.$$
													By Step \ref{step:identity on rank-one matrices 3x3} we also have
													$$SE_{jj}S^{-1} = \phi(SE_{jj}S^{-1}) =TE_{jj}T^{-1}, \qquad 1 \le j \le 3,$$
													so by linearity  $TDT^{-1} = SDS^{-1}$ and consequently,
													$$\phi(SDS^{-1}) = TDT^{-1} = SDS^{-1}$$
													for all diagonal matrices $D$. We conclude that $\phi$ is the identity map.

													\begin{step}[\textbf{Induction step}]
														By way of induction, suppose that $n \ge 4$ and that Theorem \ref{thm:main result} holds for $n-1$.
													\end{step}
													By Step 1, we can assume without loss of any generality, that $\phi(D)=D$ for any diagonal matrix $D$ and, in particular, $\phi(E_{ii})=E_{ii}$ for all $1\le i \le n$.  
													\begin{stepa}\label{step:phi na ortogonalu}
														\textit{There exists a diagonal matrix $T \in \mathcal{D}_n$ such that $\phi(A)=TAT^{-1}$ for all $ A\in \mathcal{A} \cap E_{ss}^\perp$, $1 \le s \le n$ or,  $\phi(A)=TA^tT^{-1}$ for all  $ A\in \mathcal{A} \cap E_{ss}^\perp$, $1 \le s \le n$.}
													\end{stepa}	
													\begin{proof} By the continuity of $\phi$ it suffices to consider only diagonalizable matrices. By Step \ref{st:hom0}, for any diagonalizable matrix $A\in \mathcal{A}\cap E_{ss}^\perp$ we have that $\phi(A) \in E_{ss}^\perp$. In a natural way $E_{ss}^\perp$ is isomorphic to a block upper-triangular subalgebra of $M_{n-1}$ and hence, the induction hypothesis can be applied. Having this in mind,   for each $1 \le s \le n$ there exists an invertible matrix $T_s \in M_n^\times$ commuting with $E_{ss}$ such that the map $\phi$ satisfies
														\begin{equation}\label{eq:s}
															\phi(A) = T_s A T_s^{-1},\ \ \   A\in \mathcal{A}\cap E_{ss}^\perp,  
														\end{equation}	
														or
														\begin{equation*}
															\phi(A) = T_s A^t T_s^{-1},\ \ \   A\in \mathcal{A}\cap E_{ss}^\perp.
														\end{equation*}
														The matrix $T_s$ is diagonal by our assumption preceding this step. 
														We can set the $s$-th diagonal entry of $T_s$ to be equal to $1$ for every $s$. We may assume that $\phi$ is of the form \eqref{eq:s} when $s=1$; otherwise we replace $\phi$ by the map $\phi(\cdot)^t$.
														Replacing $\phi$ by the map $T_1^{-1}\phi(\cdot)T_1$ we can assume that $\phi$ acts as identity on $E_{11}^\perp \cap \mathcal{A}$.  
														\smallskip
														
														Let $s\ne 1$. Applying that for every $A\in E_{11}^\perp \cap E_{ss}^\perp\cap \mathcal{A}$ holds $\phi(A)=T_{1}AT_1^{-1} = A$ and that $E_{11}^\perp \cap E_{ss}^\perp\cap \mathcal{A}$ contains at least $\mathrm{span}\{E_{jj},E_{kk},E_{jk}\}$ for some $1<j<k\leq n$  as $n\geq 4$, we conclude that  $\phi$ is of the form \eqref{eq:s} on $E_{ss}^\perp \cap  \mathcal{A}$ since otherwise the restriction of $\phi$ to $E_{11}^\perp \cap E_{ss}^\perp\cap \mathcal{A}$ would be multiplicative and antimultiplicative simultaneously. Therefore, as every $E_{ij}\in \mathcal{A}$ belongs to  $E_{ss}^\perp$ for some $s\notin \{ i,j\}$, $\phi(E_{ij})=c_{ij}E_{ij}$, $c_{ij}\in \C^\times$, $c_{ij}=1$ for all $i,j$ such that $E_{ij}\in E_{11}^\perp \cap \mathcal{A}$. Furthermore, replacing $\phi$ by $S^{-1}\phi(\cdot)S$, where $S=\diag(c_{12},I_{n-1})$, which does not affect matrices in $E_{11}^\perp \cap \mathcal{A}$, we assume that $\phi(E_{12})=E_{12}$. Note that $\phi$ is multiplicative on each $E_{ss}^\perp \cap \mathcal{A}$. So, we have that for any $2<j\le n$ 
														$$c_{1j}E_{1j}=\phi(E_{1j})=\phi(E_{12}E_{2j})=\phi(E_{12})\phi(E_{2j})=E_{12}E_{2j}=E_{1j}$$
														which gives $c_{1j}=1$ for all $2\le j \le n$. For any $1<j$ such that $E_{j1}\in \mathcal{A}$ we also  have
														$$E_{jj}=\phi(E_{jj})=\phi(E_{1j}E_{j1})=\phi(E_{1j})\phi(E_{j1})=c_{j1}E_{1j}E_{j1}=c_{j1}E_{jj}$$
														providing $c_{j1}=1$. 
														As now $\phi(E_{ij})=E_{ij}$ for all $E_{ij}\in \mathcal{A}$, we conclude that $T_s=I$ for all $s=1,2,\dots,n$.  
														
														\smallskip
														
													\end{proof}
													\smallskip
													
													By Step \ref{step:phi na ortogonalu} we assume that $\phi(A)=A$ for every matrix $A\in E_{ss}^\perp \cap \mathcal{A}$, $1\le s\le n$.
													
													\begin{stepa}\label{st:R} $\phi(R)=R$ \textit{for every rank-one matrix} $R\in \mathcal{A}$.
													\end{stepa}
													\begin{proof}	
														Recall that at the beginning of the proof of Theorem \ref{thm:main result} we assumed $r \ge 2$ (i.e.\ $\mathcal{A} \subsetneq M_n$).
														It suffices to consider only idempotent matrices $R$ due to the continuity and the homogeneity of $\phi$. \smallskip
														
														We first show that $\phi(R)=R$ for every matrix $R$ supported in the first row.  By Lemma \ref{lem:techR}, such a matrix can be written as  
														$$R:= e_1\left[1\  y_2\  \dots\  y_n \right]= S_1(y)^{-1}E_{11}S_1(y),$$
														where  $y=\sum_{i=2}^{n} y_i e_i$.  We observe that for every $i\notin \{1,n\} $ the matrix 
														$$R_i:= S_1(y)^{-1}E_{ii}S_1(y)=E_{ii}-y_iE_{1i}$$
														is orthogonal to $ E_{nn}$  and $R_n\perp E_{22}$.  Therefore,  $\phi(R_i)=R_i$ for every $i=2,\dots, n$ and thus, by Step \ref{st:0003}, $\phi(R)=R$. 
														\smallskip

														For the matrices being supported only in the last column, we apply that the map $\widetilde{\phi}:\mathcal{A}^\odot \to M_n$, $\widetilde{\phi}(B)=\phi(B^\odot)^\odot$, $B\in \mathcal{A}^\odot$, is injective, continuous, preserves spectrum and commutativity and moreover, $\widetilde{\phi}(E_{ij})=E_{ij}$ for all $E_{ij}\in \mathcal{A}^\odot$. Consequently, $\widetilde{\phi}$ fixes all idempotent matrices supported only in the first row. Then for every idempotent matrix $C$ supported in the last column, $\phi(C)=\widetilde{\phi}(C^\odot)^\odot = C$.
														\smallskip 

														Let us further suppose that $R$ is neither supported in the first row nor in the last column. Hence, let $a,b\in \mathbb{C}^n$ be such that $R=ab^\ast\in \mathcal{A}$. By our assumption on $R$ we have $e_n^\ast a=b^\ast e_1 =0$ and $b^*a =1$. Applying Step \ref{st:rk1}, write $\phi(ab^\ast) = xy^\ast$ for some $x,y\in\mathbb{C}^n$  with $y^\ast x =1$.  Since $ab^* \perp E_{1n}$, we have $\phi(ab^*) \leftrightarrow E_{1n}$, which implies
														$$(y^*e_1) xe_n^* = (xy^*)e_1e_n^* = (e_1e_n^*)xy^* = (e_n^*x)e_1 y^*.$$ 
														If $y^*e_1, e_n^*x \ne 0$, we obtain $x = x_1e_1$ and $y = y_ne_n$ which would imply that $\phi(R) = xy^*$ is a nonzero multiple of $E_{1n}$. By the injectivity and homogeneity of $\phi$, from $\phi(E_{1n}) = E_{1n}$ it would follow that $R$ is also a nonzero multiple of $E_{1n}$, but this contradicts the fact that $R$ is an idempotent. Therefore, we conclude $y^*e_1 =  e_n^*x = 0$. Choose any nonzero $a^\perp\in \{a\}^\perp$ and $b^\perp\in \{b\}^\perp$. By relations $e_1(a^\perp)^\ast\perp  ab^\ast$ and  $ b^\perp e_n^\ast \perp ab^\ast $ we get $e_1(a^\perp)^\ast=\phi(e_1(a^\perp)^\ast)\perp  xy^\ast$ and  $ b^\perp e_n^\ast =\phi(b^\perp e_n^\ast)\perp xy^\ast $. These relations  give
														$$ ((a^\perp)^\ast x) e_1 y^\ast= (e_1 (a^\perp)^*) xy^*=(xy^* )e_1 (a^\perp)^* = (y^\ast e_1) x(a^\perp)^\ast = 0$$
														and
														$$0 = (e_n^\ast x)b^\perp y^\ast= (b^\perp e_n^*) xy^* = (xy^* )b^\perp e_n^* = (y^\ast b^\perp)x e_n^\ast.$$
														From $(a^\perp)^\ast x=0=y^\ast b^\perp$ being true for every $a^\perp\perp a$ and every $b^\perp\perp b$ we see that  $x=\alpha a$, $y=\beta b$ for some nonzero scalars $\alpha$, $\beta$ and hence, $\phi(ab^\ast)= (\alpha \beta) ab^\ast$. Comparing the traces of $R$ and $\phi(R)$ we see that $\alpha\beta=1$ and thus $\phi(R)=R$.
													\end{proof}
													\bigskip

																\begin{step}[\textbf{Conclusion of the inductive step}] $\phi(A) = A$ \textit{for all matrices} $A \in \mathcal{A}$.   \label{st:6}
																\end{step}
																
																\begin{proof}	
																	The verification of this step is essentially the same as in Step \ref{step:finish 3x3} for the $3\times 3$ case.
																\end{proof}
															\end{proof}

															\begin{theorem}\label{between two block upper-triangular algebras}
																Let $\mathcal{A}, \mathcal{B} \subseteq M_n$ be two block upper-triangular algebras and let $\phi : \mathcal{A} \to \mathcal{B}$ be a continuous injective spectrum and commutativity preserving map. Then one of the following is true:
																\begin{itemize}
																	\item $\mathcal{A} \subseteq \mathcal{B}$ and there exists $T \in \mathcal{B}^{\times}$ such that $\phi(X) = TXT^{-1}$ for all $X \in \mathcal{A}$.
																	\item $\mathcal{A}^{\symm} \subseteq \mathcal{B}$ and there exists $T \in \mathcal{B}^{\times}$ such that $\phi(X) = TX^{\symm}T^{-1}$ for all $X \in \mathcal{A}$.
																\end{itemize}
															\end{theorem}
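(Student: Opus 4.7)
The plan is to combine the two principal results proved earlier in the paper. First I would note that since $\mathcal{B} \subseteq M_n$, the map $\phi$ satisfies all the hypotheses of Theorem \ref{thm:main result} when viewed as a map $\mathcal{A} \to M_n$: it is continuous, injective, and preserves commutativity and spectrum. Assuming $n \ge 3$, Theorem \ref{thm:main result} applies directly and tells us that $\phi$ is a Jordan embedding of $\mathcal{A}$ into $M_n$ (the low-dimensional cases $n = 1,2$ reduce to finitely many subalgebras and can be checked by hand).

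Because $\phi(\mathcal{A}) \subseteq \mathcal{B}$ by hypothesis, the Jordan embedding produced above actually lands inside $\mathcal{B}$. We therefore have a Jordan embedding $\phi : \mathcal{A} \to \mathcal{B}$ between two block upper-triangular subalgebras of $M_n$, which is exactly the setting of Corollary \ref{cor:Jordan monomorphism unified}. Invoking that corollary yields precisely the dichotomy in the statement: either $\mathcal{A} \subseteq \mathcal{B}$ and $\phi(X) = TXT^{-1}$ for some $T \in \mathcal{B}^{\times}$, or $\mathcal{A}^{\symm} \subseteq \mathcal{B}$ and $\phi(X) = TX^{\symm}T^{-1}$ for some $T \in \mathcal{B}^{\times}$.

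In this sense, the theorem requires essentially no new work beyond what has already been developed. The role of Theorem \ref{thm:main result} is to translate the preserver hypotheses into the algebraic statement that $\phi$ is a Jordan embedding, while the role of Corollary \ref{cor:Jordan monomorphism unified} is the refinement that, when both the domain and codomain are block upper-triangular, the intertwining matrix $T$ may be chosen inside $\mathcal{B}^{\times}$ (rather than merely in $M_n^{\times}$), at the cost of the inclusion $\mathcal{A} \subseteq \mathcal{B}$ or $\mathcal{A}^{\symm} \subseteq \mathcal{B}$. There is no genuine obstacle; the only point worth emphasizing is that the conclusion is stronger than what Theorem \ref{thm:Jordan monomorphisms on A} alone would give, precisely because the codomain is now the smaller algebra $\mathcal{B}$.
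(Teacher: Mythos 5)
Your proposal is exactly the paper's proof: invoke Theorem \ref{thm:main result} to conclude that $\phi$ is a Jordan embedding, then apply Corollary \ref{cor:Jordan monomorphism unified} to obtain the stated dichotomy with $T \in \mathcal{B}^{\times}$. The only caveat is your parenthetical about $n=1,2$: the result is implicitly in the $n\ge 3$ setting of Theorem \ref{thm:main result} (and the counterexamples cited in Section \S\ref{sec:Counterexamples} show the preserver hypotheses do not force a Jordan embedding when $n=2$), so those cases are excluded rather than ``checked by hand.''
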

															\begin{proof}
																Theorem \ref{thm:main result} implies that $\phi$ is a Jordan embedding. The rest of the result follows from Corollary \ref{cor:Jordan monomorphism unified}.
															\end{proof}
															
															When we further assume that $\mathcal{B} = \mathcal{A}$, so that $\phi : \mathcal{A} \to \mathcal{A}$, we can relax the spectrum preserving assumption to spectrum shrinking ($\sigma(\phi(X)) \subseteq \sigma(X)$ for all $X \in \mathcal{A}$). More precisely, we obtain the following result (similarly as in \cite{Tomasevic}, the proof relies on the invariance of domain theorem).
															
															\begin{corollary}\label{cor:invdomain}
																Let $\mathcal{A} \subseteq M_n$ be a block upper-triangular algebra and let $\phi : \mathcal{A} \to \mathcal{A}$ be a continuous injective commutativity preserving spectrum shrinking map. Then one of the following is true:
																\begin{itemize}
																	\item There exists $T \in \mathcal{A}^{\times}$ such that $\phi(X) = TXT^{-1}$ for all $X \in \mathcal{A}$.
																	\item $\mathcal{A}^{\symm} = \mathcal{A}$ and there exists $T \in \mathcal{A}^{\times}$ such that $\phi(X) = TX^{\symm}T^{-1}$ for all $X \in \mathcal{A}$.
																\end{itemize}
															\end{corollary}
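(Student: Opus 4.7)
The plan is to upgrade the weaker spectrum shrinking hypothesis to full spectrum preservation, after which Theorem \ref{between two block upper-triangular algebras} (with $\mathcal{B}=\mathcal{A}$) does all the work. First I would view $\mathcal{A}$ as a finite-dimensional real vector space (of dimension $2\dim_{\C}\mathcal{A}$). Since $\phi : \mathcal{A} \to \mathcal{A}$ is continuous and injective between topological spaces of equal finite dimension, the invariance of domain theorem yields that $\phi(\mathcal{A})$ is relatively open in $\mathcal{A}$ and that the inverse $\phi^{-1} : \phi(\mathcal{A}) \to \mathcal{A}$ is continuous.

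For any $Y \in \phi(\mathcal{A})$, writing $Y = \phi(X)$, the spectrum shrinking assumption rewrites as $\sigma(Y) \subseteq \sigma(\phi^{-1}(Y))$. Thus $\phi^{-1}$ is a continuous, spectrum-expanding map defined on the open set $\mathcal{U} := \phi(\mathcal{A}) \subseteq \mathcal{A}$. Lemma \ref{lem:char} applies (it only needs the hypothesis on an open subset of $\mathcal{A}$ and tolerates a codomain lying inside $M_n$) and gives $p_Y = p_{\phi^{-1}(Y)}$ for every $Y \in \mathcal{U}$. Consequently $\sigma(\phi(X)) = \sigma(X)$ for all $X \in \mathcal{A}$, i.e.\ $\phi$ is spectrum preserving.

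Now Theorem \ref{between two block upper-triangular algebras} applied with $\mathcal{B}=\mathcal{A}$ produces $T \in \mathcal{A}^\times$ with either $\phi(X)=TXT^{-1}$ on $\mathcal{A}$, or else $\mathcal{A}^\symm \subseteq \mathcal{A}$ and $\phi(X)=TX^\symm T^{-1}$ on $\mathcal{A}$. In the second case, the map $X \mapsto X^\symm$ is a bijective linear involution of $M_n$, so $\dim \mathcal{A}^\symm = \dim \mathcal{A}$; combined with $\mathcal{A}^\symm \subseteq \mathcal{A}$ in a finite-dimensional setting this forces $\mathcal{A}^\symm = \mathcal{A}$, matching the corollary's statement. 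The only (very mild) point requiring care is that $\phi^{-1}$ is only defined on an open subset rather than on all of $\mathcal{A}$; this is precisely the scenario covered by Lemma \ref{lem:char}, so it is not a genuine obstacle.
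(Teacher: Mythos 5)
Your proposal is correct and follows essentially the same route as the paper: invariance of domain makes $\phi$ a homeomorphism onto the open set $\phi(\mathcal{A})$, Lemma \ref{lem:char} applied to the spectrum-enlarging inverse upgrades spectrum shrinking to preservation of the characteristic polynomial, and Theorem \ref{between two block upper-triangular algebras} with $\mathcal{B}=\mathcal{A}$ finishes (your dimension remark giving $\mathcal{A}^{\symm}=\mathcal{A}$ is a fine, if implicit in the paper, detail).
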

															
															\begin{proof} The result follows immediately from Theorem \ref{between two block upper-triangular algebras} once we prove $\phi$ is spectrum preserving. By the invariance of domain theorem, the image $\mathcal{R} = \phi(\mathcal{A})$ is an open set in $\mathcal{A}$ and $\phi$ is a homeomorphism onto $\mathcal{R}$. Then $\phi^{-1}:\mathcal{R}\to \mathcal{A}$ enlarges the spectrum: $\sigma(A)\subseteq \sigma (\phi^{-1}(A))$. By Lemma \ref{lem:char} the characteristic polynomial is being preserved and hence, $\phi$ preserves the spectrum.
															\end{proof}

															\section{Counterexamples}\label{sec:Counterexamples}
															
															We show the optimality of Theorem \ref{thm:main result} via counterexamples. In short, all assumptions except injectivity are indispensable for all block upper-triangular algebras except $M_n$, while injectivity is superfluous in the $M_n$ case and necessary in all other cases. The necessity of $n \ge 3$ is shown by examples from \cite[Example 7]{PetekSemrl} and \cite[Theorem 4]{Petek}. Therefore we assume $n \ge 3$ and let $\mathcal{A}=\mathcal{A}_{k_1,\dots,k_r}\subseteq M_n$ be an arbitrary block upper-triangular algebra not equal to $M_n$.
															
															\begin{example}[Injectivity, continuity and commutativity preserving are not enough]
																Let $D$ be the open unit disk in $\C$. Define a map $g : D \to D$ by
																$$
																g(z) = \frac{1-3z}{3-z}.
																$$
																It is easy to check that $g$ is a holomorphic bijection (actually, it is an involution). Consider the map $\phi : \mathcal{A} \to M_n$ given by
																$$\phi(X) = g\left(\frac{X}{1+\norm{X}}\right),$$
																where $\norm{\cdot}$ denotes the spectral norm. The map $\phi$ is well-defined, as for each $X \in \mathcal{A}$ the matrix $\frac{X}{1+\norm{X}}$ has norm $< 1$ and hence its spectrum is contained in $D$, at which point we can apply $g$ using the holomorphic functional calculus. Using the properties of the holomorphic functional calculus, we conclude that $\phi$ is continuous and preserves commutativity. Moreover, since the map $X \mapsto \frac{X}{1+\norm{X}}$ is injective, via the application of $g^{-1}=g$ we conclude that $\phi$ is injective. However, $\phi$ is clearly not linear as
																$$\phi(0) = g(0)= \frac13 I.$$
															\end{example}
															
															\begin{example}[Commutativity preserving is necessary]
																Consider the map $f : M_n \to M_n^{\times}$ given by
																$$f(X)=\diag(e^{\det X}, 1, \ldots, 1).$$
																Then the map $\phi : \mathcal{A} \to M_n$ given by
																$$\phi(X) = f(X)Xf(X)^{-1}$$
																is clearly continuous and preserves the spectrum. Moreover, since $f$ is a similarity invariant and $\phi(X)$ is similar to $X$, we can see that $\phi$ is bijective with the inverse $Y \mapsto f(Y)^{-1}Yf(Y)$. However, $\phi$ is not linear:
																$$\phi(I+E_{12}) = I+eE_{12} \ne I+E_{12} = \phi(I)+\phi(E_{12}).$$
															\end{example}
															
															\begin{example}[Continuity is necessary]
																As in \cite{PetekSemrl}, consider the map $\phi : \mathcal{A} \to M_n$ given by $$\phi(X) = \begin{cases} \diag(\lambda_2,\lambda_1, \ldots, \lambda_n), \quad &\text{ if }X = \diag(\lambda_1, \ldots, \lambda_n) \text{ and all $\lambda_i$ are distinct,}\\
																	X, &\text{ otherwise}
																\end{cases}$$
																which is bijective, spectrum and commutativity preserving but clearly not continuous. 
															\end{example}
															
															\begin{example}[Injectivity is necessary] Consider the map $\phi : \mathcal{A} \to M_n$ given by 
																$$\phi\left(\begin{bmatrix} X_{11} & X_{12} & \cdots & X_{1n} \\ 0 & X_{22} & \cdots & X_{2n} \\
																	\vdots & \vdots & \ddots & \vdots \\
																	0 & 0 & \cdots & X_{rr} \end{bmatrix}\right)=\begin{bmatrix} X_{11} & 0 & \cdots & 0 \\
																	0 & X_{22} & \cdots & 0 \\
																	\vdots & \vdots & \ddots & \vdots \\
																	0 & 0 & \cdots & X_{rr} \end{bmatrix}.$$
																Then $\phi$ is clearly a unital Jordan homomorphism (and hence satisfies all assumptions of Theorem \ref{thm:main result}), but is not injective.
															\end{example}
															
															\section*{Funding}
															The second author is supported by the Slovenian Research and Innovation  Agency (core research program P1-0306).

														\end{document}